\documentclass[letter,10pt]{article}
\usepackage{amssymb}
\usepackage{amsthm}
\theoremstyle{dotless}
\usepackage{amsmath}
\usepackage{mathrsfs}
\usepackage{verbatim}
\usepackage{enumerate}

\newcommand{\R}{\mathbb{R}}
\usepackage{cite}
\usepackage{hyperref}
\usepackage{changes}
\usepackage{color}

\begin{document}
\title{Multi-marginal optimal transport: theory and applications\footnote{The author is pleased to acknowledge the support of a University of Alberta start-up grant and National Sciences and Engineering Research Council of Canada Discovery Grant number 412779-2012. 
}}

\author{Brendan Pass\footnote{Department of Mathematical and Statistical Sciences, 632 CAB, University of Alberta, Edmonton, Alberta, Canada, T6G 2G1 pass@ualberta.ca.}}
\maketitle
\begin{abstract} 
Over the past five years, multi-marginal optimal transport, a generalization of the well known optimal transport problem of Monge and Kantorovich, has begun to attract considerable attention, due in part to a wide variety of emerging applications.  Here, we survey this problem, addressing fundamental theoretical questions including the uniqueness and structure of solutions.  The answers to these questions uncover a surprising divergence from the classical two marginal setting, and reflect a delicate dependence on the cost function, which we then illustrate with a series of examples.    We go one to describe some applications of the multi-marginal optimal transport problem, focusing primarily on matching in economics and density functional theory in physics.

\end{abstract}

\section{Introduction}
This paper surveys the theory and applications of multi-marginal optimal transport, the general problem of aligning or correlating several measures so as to maximize efficiency (with respect to a given cost function).  There are two precise formulations;  in the Monge formulation,  given compactly supported Borel probability measures $\mu_1,...,\mu_m$ (marginals) on smooth manifolds $M_1,...,M_m$, respectively, and a continuous cost function $c(x_1,...,x_m)$, ones seeks to minimize
\begin{equation}\label{mmm}
\int_{M_1 }c(x_1,F_2(x_1),...,F_m(x_1))d\mu_1(x_1)
\end{equation}
among all $(m-1)$-tuples of maps, $(F_2, F_3,...,F_m)$ such that $F_{i\#}\mu_1=\mu_i$  for $i=2,3,...m$.  Here, the \textit{push-forward} $F_{i\#}\mu_1$ of the measure $\mu_1$ by the map $F_i: M_1 \rightarrow M_i$ is the measure on $M_i$ defined by $(F_{i\#}\mu_1)(A) = \mu_1(F_i^{-1}(A))$ for all Borel $A \subset M_i$.

In the Kantorovich formulation, we seek to minimize 
\begin{equation}\label{mmk}
\int_{M_1 \times M_2 \times ... \times M_m}c(x_1,x_2,...,x_m)d\gamma(x_1,x_2,...,x_m)
\end{equation}
over the set $\Pi(\mu_1,\mu_2,...,\mu_m)$ of positive joint measures $\gamma$ on the product space $M_1 \times M_2 \times ... \times M_m$ whose marginals are the $\mu_i$.  Note that if $F_2,...,F_m$ satisfy the push-forward constraints in \eqref{mmm}, then $\gamma:=(Id,F_2,...,F_m)_{\#}\mu_1 \in \Pi(\mu_1,\mu_2,...,\mu_m)$, and 
$$
\int_{M_1 }c(x_1,F_2(x_1),...,F_m(x_1))d\mu_1(x_1) =\int_{M_1 \times M_2 \times ... \times M_m}c(x_1,x_2,...,x_m)d\gamma(x_1,x_2,...,x_m),
$$
and so problem \eqref{mmk} is a relaxation of \eqref{mmm}.

The Kantorovich problem \eqref{mmk} amounts to a linear minimization over a convex, weakly compact set; it is not difficult to assert existence of a solution.  Much of the attention in the literature has focused on uniqueness and the structure of the minimizer(s); in particular, a natural question is to determine when the solution concentrates on the graph of a function $(F_2,...,F_m)$ over the first marginal, in which case this function induces a solution to \eqref{mmm} (often called a \textit{Monge} solution).

When $m=2$, \eqref{mmm} and \eqref{mmk} correspond respectively to the classical (two marginal) optimal transport problems of Monge and Kantorovich, which have seen a great outpouring of results over the last 25 years, and remain very active research problems, at the interface of geometry, PDE, functional analysis and probability, with applications in physics, economics, fluid mechanics, meteorology, etc.  Theory in the two marginal setting is fairly well understood, and is exposed nicely in two books by Villani \cite{V,V2} and several survey papers, including those by Ambrosio-Gigli \cite{AG13}, Evans \cite{Evans} and McCann\cite{Mc14}.  In particular, it is well known that under mild conditions on the cost function and marginals, the solution to \eqref{mmk} is unique and is concentrated on the graph of a function, which in turn solves \eqref{mmm}.

The extension to $m \geq 3$ marginals is not as well understood, but has recently begun to attract a fair bit of attention, due to a diverse variety of emerging applications.  Our main goals in this manuscript are first to survey the theory of multi-marginal optimal transport problems, addressing questions on existence of Monge solutions, as well as uniqueness and structure of Kantorovich solutions, and secondly, to discuss the consequences and interpretation of this theory in the context of applications.  On the theoretical side, the multi-marginal literature is somewhat fractured, with many papers addressing particular cost functions, or providing generalizations of earlier results.  It is only recently that a clearer picture of the underlying structure has begun to emerge, and one of the aims of this paper is to present a more unified view of what is known about multi-marginal problems. 

We will attempt to frame the multi-marginal theory relative to the better understood two marginal theory; throughout the text, we will comment on how many of the multi-marginal results we develop compare to analogues from the two marginal setting and try to explain the differences. 
In particular, we will formulate conditions that are analogous to the well known twist and non-degeneracy conditions, also known as \textbf{(A1)} and \textbf{(A2)}, introduced in the groundbreaking regularity theory of Ma, Trudinger and Wang \cite{mtw}; our conditions \textbf{(MMA1)} and \textbf{(MMA2)} ("multi-marginal" \textbf{(A1)} and \textbf{(A2)},  respectively)  imply structural results on optimal measures analogous to those implied by \textbf{(A1)} and \textbf{(A2)} in the two marginal setting.  

It will be clearly apparent that the conditions \textbf{(MMA1)} and \textbf{(MMA2)} are much more restrictive than their two marginal counterparts \textbf{(A1)} and \textbf{(A2)}, and this hints at a striking difference between two and multi-marginal problems.  A dichotomy has begun to emerge between multi-marginal costs which satisfy, for example, \textbf{(MMA1)}, in which case optimizers in \eqref{mmk} are concentrated on graphs over $x_1$ and are unique, and those which violate it, in which case solutions can concentrate on higher dimensional  submanifolds of the product space, and may be non unique.  This sensitivity to the cost function is largely absent from two marginal problems, and, after presenting the general theory, we illustrate it with a series of examples, exhibiting cost functions for which optimizers have Monge solutions and are unique, as well as some for which these properties fail.  Several of these examples are relevant in the applications discussed subsequently.

Among several  applications of multi-marginal optimal transport, we focus primarily on two, which reflect and illustrate the theory.  One comes from matching problems in economics, and here the theory mirrors the two marginal case fairly closely.  For the other, which originates in density functional theory (DFT), modeling electronic correlations in physics, the theory is quite different.    Our choice to focus here on matching problems and density functional theory stems from two facts: 1) the structure of optimal measures obtained in these applications is reasonably well understood and 2) these applications nicely demonstrate the strong qualitative dependence of the solution on the cost function.  The costs arising in matching problems satisfy (under certain weak hypotheses) \textbf{(MMA1)} and \textbf{(MMA2)}, resulting in low dimensional, unique solutions, closely resembling the two marginal theory, whereas the costs relevant to DFT permit non-unique, higher dimensional solutions.   For both of these applications, we describe in some detail the modeling process leading to the optimal transport problem, and then discuss what is known about the solution, leaning on the theory developed in the second section.  Other applications for multi-marginal problems or variants of them, in, for example, image processing and mathematical finance are also described briefly.

Let us emphasis that the question of whether or not solutions are of Monge type is vital from an applied point of view.  First, computationally, they represent a considerable dimensional reduction; for the same reason, even in the absence of Monge solutions, it is important to try and estimate the dimension of the sets on which the solution can concentrate, which is largely the focus of subsection \ref{dimsection}.  In addition, they can be interpreted as interesting phenomena in various applications; for example, Monge solutions are known as \textit{pure} solutions in matching problems, and \textit{strictly correlated electrons} in the DFT literature, concepts which we explain further in section 3  below.


The paper is organized as follows.  Section 2 is devoted to theory; we first formulate the conditions \textbf{(MMA1)} and \textbf{(MMA2)}.  We then illustrate this theory with a series of examples, and close out the section with a brief discussion of variants and extensions of \eqref{mmk}. We turn our attention to applications in section 3; the discussion on matching and DFT problems is followed by a short overview of a variety of other applications.  Section 3 is then concluded with a brief description of numerical methods for \eqref{mmk}.

\section{Theory}
We will assume for simplicity that each $M_i$ shares a common dimension, $n:=dim(M_i)$, as this case is the most relevant in applications and has also received the most theoretical interest.   We note, however, that much of the theory, particularly in the first subsection, has partial extensions to problems where the dimensions differ.  Our main object of interest will be the \textit{support}, $spt(\gamma)$, of the optimal measure $\gamma$, which is defined as the smallest closed subset of $M_1 \times M_2 \times...\times M_m$ of full mass, $\gamma(spt(\gamma)) =1$.

We begin by recording some notation from differential geometry.  Let $T_{x_i}M_i$ denote the tangent space of $M_i$ at $x_i$, and $T_{x_i}^*M_i$ its dual, the cotangent space of $M_i$ at $x_i$.  Denote by $D_{x_i}c \in T_{x_i}^*M_i$  the differential of $c$ with respect to $x_i$, that is $D_{x_i}c=\frac{\partial c}{\partial x_i^{\alpha_i}}dx_i^{\alpha_i}$ in local coordinates.\footnote{Here and in what follows, summation on the repeated  index $\alpha_i$ is implicit, in accordance with the Einstein summation convention.}  For each $i \neq j$, we consider the bilinear form $D^2_{x_ix_j} c :T_{x_i}M_i \times T_{x_j}M_j \rightarrow \mathbb{R}$, defined in local coordinates by $D^2_{x_ix_j} c =\frac{\partial ^2 c}{ \partial x_j^{\alpha_j} \partial x^{\alpha_i}_i}dx_i^{\alpha_i} \otimes dx_j^{\alpha_j}$.\footnote{The same notation will sometimes be used to denote the obvious extension of this form to the whole tangent space, $T_{x_1}M_1 \times T_{x_2}M_2 \times...\times T_{x_m}M_m$. }   We will often identify  $D^2_{x_ix_j} c$ with the corresponding $n \times n$ matrix, which is nothing other than the matrix mixed second order partials $ D^2_{x_ix_j}c :=(\frac{\partial ^2 c}{\partial x^{\alpha_i}_i \partial x_j^{\beta_j}})_{\alpha_i, \alpha_j}$.

\subsection{Local structure of the optimizer: dimension of the support}\label{dimsection}
First, we discuss what can be said about the Hausdorff dimension of $spt(\gamma)$.  We first recall a result from the two marginal case, proved with McCann and Warren \cite{MPW}, which requires the following assumption, originally introduced in \cite{mtw}.

\newtheorem*{nondeg}{(A2)}
\begin{nondeg}
\textbf{(Non-degeneracy)} At a point $(x_1,x_2) \in M_1 \times M_2$, assume  the matrix $D_{x_1x_2}^2c(x_1,x_2)$ has full rank.  
\end{nondeg}
Under this condition, we have the following result from \cite{MPW}.
\newtheorem{rectifiability}{Theorem}[subsection]
\begin{rectifiability}\label{rectifiability}\textbf{(Local $n$-rectifiability for two marginal problems)}
Let $m=2$.  Assume $c$ is non-degenerate at some point $(x_1,x_2)$.  Then there is a neighbourhood $N$ of $(x_1,x_2)$ such that, for any optimal measure, $\gamma$, $N \cap spt(\gamma)$ is contained in an $n$-dimensional Lipschitz submanifold of the product space.
\end{rectifiability}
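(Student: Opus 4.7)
The plan is to exploit the two standard tools of two-marginal optimal transport, namely $c$-cyclical monotonicity of the support and the $c$-concave dual potentials, and then to linearize the problem through the non-degeneracy hypothesis to reduce the claim to a classical fact about subdifferentials of semi-concave functions.

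First I would recall that optimality of $\gamma$ implies that $spt(\gamma)$ is $c$-cyclically monotone, and then invoke a local version of Rockafellar's theorem (as in Gangbo--McCann) to produce a neighbourhood $U \subset M_1$ of $x_1$ and a $c$-concave function $u_1 : U \to \mathbb{R}$ with the property that for every $(y_1,y_2) \in spt(\gamma)$ with $y_1 \in U$, the covector $D_{x_1}c(y_1,y_2)$ belongs to the limiting superdifferential $\partial^+ u_1(y_1)$. Because $c$ is $C^2$, $c$-concavity on a compact neighbourhood forces $u_1$ to be semi-concave (after possibly shrinking $U$); this is the standard regularity gain behind the theory.

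Next I would use the non-degeneracy assumption to introduce the map
\[
\Phi(y_1,y_2) := \bigl(y_1, D_{x_1}c(y_1,y_2)\bigr) \in M_1 \times T^*_{y_1}M_1 .
\]
Its partial differential in $y_2$ is exactly $D^2_{x_1x_2}c$, which has full rank at $(x_1,x_2)$ by \textbf{(A2)}, so the inverse function theorem supplies a neighbourhood $N$ of $(x_1,x_2)$ on which $\Phi$ is a $C^1$ diffeomorphism onto its image. Under $\Phi$, the containment above becomes
\[
\Phi\bigl(N \cap spt(\gamma)\bigr) \subset \bigl\{(y_1,p) : p \in \partial^+ u_1(y_1)\bigr\} ,
\]
i.e.\ the image of $spt(\gamma)$ sits inside the graph of the superdifferential of a semi-concave function. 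By a classical result (Minty's parametrization of the graph of a maximal monotone operator, applied to $-u_1$ after adding a sufficiently convex quadratic to turn it into a convex potential), this graph is an $n$-dimensional Lipschitz submanifold of $U \times T^*U$. Pulling back through the $C^1$ diffeomorphism $\Phi^{-1}$ preserves this Lipschitz submanifold structure, giving the desired containment for $N \cap spt(\gamma)$.

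The main obstacle I expect to be the second step: converting the qualitative fact that $spt(\gamma)$ lies in a $c$-superdifferential into the quantitative Lipschitz submanifold structure. The subtle points are (i) justifying semi-concavity of $u_1$ in a uniform neighbourhood, which requires controlling the $c$-transform using compactness plus the smoothness of $c$, and (ii) invoking the Minty-type parametrization, which really needs the associated operator to be monotone--and monotonicity here is exactly what non-degeneracy of $D^2_{x_1x_2}c$ supplies in the new coordinates. Everything else is a straightforward application of the inverse function theorem and standard calculus, but this bridge from $c$-cyclical monotonicity to the Lipschitz submanifold structure is precisely the geometric content of the result.
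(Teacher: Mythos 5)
Your proposal is correct and follows essentially the same route as the cited proof of McCann--Pass--Warren: produce a semi-concave dual potential $u_1$ via duality, observe that $D_{x_1}c(y_1,y_2)\in\partial^+ u_1(y_1)$ along the support, parametrize the graph of $\partial^+ u_1$ as an $n$-dimensional Lipschitz manifold via Minty's change of variables, and transport this back to the product space through the local diffeomorphism $(y_1,y_2)\mapsto(y_1,D_{x_1}c(y_1,y_2))$ supplied by non-degeneracy.
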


An analogous result governing the behaviour of multi-marginal optimizers was proven by the author in \cite{P} and is given below.  It helps classify the allowable dimensions of the support of $\gamma$.  The statement of the result requires some additional notation.

We let $P$ be the set of partitions of $\{1,2,....,m\}$ into two non empty  disjoint subsets; that is, $p:=\{p_+,p_-\} \in P$ means that $p_+ \cup p_- = \{1,2,....,m\}$, $p_+ \cap p_- =\emptyset$ and $p_+,p_- \neq \emptyset$.  For each $p:=\{p_+,p_-\} \in P$, we define the bilinear form $g_p$ on $M_1 \times M_2 \times...\times M_m$ by $g_p = \sum_{i\in p_+,j\in p_-}(D^2_{x_ix_j}c +D^2_{x_jx_i}c)$.  We can identify $g_p$ with an $mn \times mn$ matrix whose  $i,j$ block is zero if $i$ and $j$ both belong to either $p_+$ or $p_-$ and $D^2_{x_ix_j} c$ otherwise. Define $G:=\{\sum_{p \in P}t_p g_p: t_p \geq 0, \sum_{p \in P}t_p=1\}$ to be the convex hull generated by the $g_p$.

Note that the diagonal blocks of any $g \in G$ are $n \times n$ $0$ matrices.  The off-diagonal blocks are nonnegative multiples 
\begin{equation*}
\sum_{\substack{p=\{p_+,p-\}\\  x_i \in p_+\text{ and }x_j \in p_- \textit{ or } \\x_j \in p_+\text{ and }x_i \in p_-}}t_p  D^2_{x_ix_j}c
\end{equation*}
 of the $D^2_{x_ix_j} c$.
Note that, as $ D^2_{x_ix_j}c= \big(D^2_{x_jx_i}c\big)^T $, each $g \in G$ is symmetric and therefore its signature (the number of positive, negative, and zero eigenvalues, respectively) is well defined.  The following theorem, proved in \cite{P},  controls the dimension of the support of the optimizer(s) $\gamma$ in terms of these signatures.

\newtheorem{spacelike}[rectifiability]{Theorem}
\begin{spacelike}\label{spacelike}\textbf{(Dimensional bounds on $spt(\gamma)$ for multi-marginal problems)}
Suppose that at some point $x \in M_1 \times M_2 \times...\times M_m$, the signature of some $g \in G$ is $(\lambda_+,\lambda_- , mn-\lambda_+-\lambda_-)$.  Then there exists a neighbourhood $N$ of $x$ such that $N \cap spt(\gamma)$ is contained in a Lipschitz submanifold of the product space with dimension no greater than  $mn-\lambda_+$.  If $spt(\gamma)$ is differentiable at $x$, it is \emph{timelike} for $g$; that is $v^tgv \leq 0$ for any $v \in T_x spt(\gamma)$ in the tangent space of $spt(\gamma)$.
\end{spacelike}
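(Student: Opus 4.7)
The plan is to exploit $c$-cyclical monotonicity of $spt(\gamma)$ at the level of pairs and partition swaps. For any two points $x, y \in spt(\gamma)$ and any partition $p = \{p_+, p_-\} \in P$, the swapped points $x^+$ (with $i$-th coordinate $x_i$ for $i \in p_+$ and $y_i$ for $i \in p_-$) and $x^-$ (with the opposite assignment) satisfy
$$c(x)+c(y) \;\leq\; c(x^+)+c(x^-).$$
Taylor-expanding in $v := y-x$ about $x$, the first-order terms cancel; the purely block-diagonal second-order contributions $D_{x_ix_i}^2 c$ also cancel on both sides; and what remains is precisely
$$v^T g_p v \;\leq\; o(|v|^2).$$
Applied along sequences $y_n \in spt(\gamma)$ with $y_n \to x$ and $(y_n-x)/|y_n-x| \to v$, this gives $v^T g_p v \leq 0$ for every tangent direction $v$ to $spt(\gamma)$ at $x$. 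Taking convex combinations in $p$ immediately upgrades this to $v^T g v \leq 0$ for every $g \in G$, which is the timelike conclusion when $spt(\gamma)$ is differentiable at $x$.

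The dimension bound is then essentially linear algebra: if $g$ has signature $(\lambda_+, \lambda_-, mn - \lambda_+ - \lambda_-)$, any subspace on which $g \leq 0$ intersects the $\lambda_+$-dimensional positive eigenspace only at the origin, so has dimension at most $mn - \lambda_+$.

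The main obstacle is promoting these infinitesimal/tangential conclusions to the assertion that $N \cap spt(\gamma)$ is genuinely contained in a single Lipschitz submanifold of dimension $\leq mn - \lambda_+$. The idea is to exploit strict positivity of $g$ on an open cone $\mathcal{C}$ about its positive eigenspace, together with continuity of the second derivatives of $c$ over a small neighborhood of $x$. Quantitatively, if $v = y' - y \in \mathcal{C}$ with $y,y'$ both close to $x$, then $v^T g v$ is bounded below by $c_0 |v|^2$ for some $c_0 > 0$, which dominates the cubic remainder in the Taylor expansion above, contradicting the pairwise monotonicity inequality. This promotes the second-order inequality into a \emph{uniform double-cone} condition: there is a neighborhood $N$ of $x$ and an open cone $\mathcal{C}'$ such that for any two distinct $y, y' \in N \cap spt(\gamma)$, $y' - y \notin \mathcal{C}'$. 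A standard lemma (as invoked in \cite{MPW} for the two-marginal case) then yields that closed sets satisfying a uniform double-cone condition are locally contained in the graph of a Lipschitz function over the complementary $(mn - \lambda_+)$-dimensional subspace.

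As a consistency check, in the $m=2$ specialization there is one nontrivial partition and $g$ is determined by $D_{x_1 x_2}^2 c$; the non-degeneracy hypothesis then forces $\lambda_+ = \lambda_- = n$, recovering Theorem \ref{rectifiability}.
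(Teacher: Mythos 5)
Your proposal is correct and is the same argument that underlies Theorem~\ref{spacelike} in \cite{P} (itself generalizing the two-marginal argument of \cite{MPW} and Kim--McCann \cite{KM}): two-point $c$-monotonicity under a partition swap, Taylor expansion to second order showing that the first-order and diagonal second-order terms cancel leaving exactly $-\frac{1}{2}v^{T}g_{p}v$, a uniform strict version of this inequality near $x$ furnishing a double-cone condition, and the standard cone lemma producing a Lipschitz graph over a $(mn-\lambda_{+})$-dimensional subspace. The only cosmetic inaccuracy is calling the Taylor error a ``cubic remainder''; under the paper's standing $C^{2}$ hypothesis it is merely $o(|v|^{2})$, which is still dominated by $c_{0}|v|^{2}$ on the open cone, so the argument is unaffected.
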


\newtheorem{bestsig}[rectifiability]{Remark}
\begin{bestsig}
Because the zero diagonal blocks of each $g \in G$ ensure the existence of an $n$-dimensional \emph{lightlike} subspace of $T_x(M_1 \times M_2 \times...\times M_m)$ (that is, a subsapce $S$ for which $v^tgv =0$ for all $v \in S$), standard linear algebra arguments imply that we must have $\lambda_- \leq (m-1)n$; similarly, $\lambda_+ \leq (m-1)n$.  Therefore, the smallest bound on the dimension of $spt(\gamma)$ which Theorem \ref{spacelike} can provide is $n$.   Provided that $g$ is invertible, the bound will be at most $(m-1)n$, the largest allowable value of $\lambda_-=mn-\lambda_+$.
\end{bestsig}
With Theorem \ref{rectifiability} in mind, we introduce the following analogue of $\textbf{(A2)}$.
\newtheorem*{mmnondeg}{(MMA2)}
\begin{mmnondeg}
At a point $x \in M_1 \times M_2 \times...\times M_m$, assume that there is some $g \in G$ with signature $((m-1)n,n,0)$.
\end{mmnondeg}
As an immediate application of we have the following analogue of Theorem \ref{rectifiability}, which serves as the impetus behind the nomenclature \textbf{(MMA2)}.

\newtheorem{ndimsol}[rectifiability]{Corollary}

\begin{ndimsol}
If $c$ satisfies \textbf{(MMA2)} at some point $x$, then there is a neighbourhood $N$ of $x$ such that $N \cap spt(\gamma)$ is contained in an $n$-dimensional Lipschitz submanifold of the product space.
\end{ndimsol}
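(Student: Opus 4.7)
The plan is to apply Theorem \ref{spacelike} directly, with no additional work beyond substituting the signature prescribed by \textbf{(MMA2)}. The hypothesis guarantees the existence of some $g \in G$ at $x$ whose signature is $((m-1)n, n, 0)$, so in particular $\lambda_+ = (m-1)n$. Plugging this value into the dimensional bound provided by Theorem \ref{spacelike}, we obtain a neighbourhood $N$ of $x$ such that $N \cap spt(\gamma)$ lies in a Lipschitz submanifold of the product space of dimension at most $mn - (m-1)n = n$, which is exactly the assertion of the corollary (reading a submanifold of dimension $k < n$ as locally contained in an $n$-dimensional one, obtained by taking a product with a transverse disk in an ambient chart).

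As the preceding remark observes, the block structure of elements of $G$ (with zero diagonal blocks) forces the existence of an $n$-dimensional lightlike subspace, and hence $\lambda_+ \leq (m-1)n$ for any $g \in G$. Thus the signature $((m-1)n, n, 0)$ demanded by \textbf{(MMA2)} is the most favorable one compatible with the constraints on $G$, and Theorem \ref{spacelike} accordingly delivers the sharpest possible bound on $\dim(N \cap spt(\gamma))$, namely $n$. The requirement that the null eigenspace be trivial is essential, since this is what forces $\lambda_+$ up to its maximum value $(m-1)n$ simultaneously with $\lambda_- = n$.

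There is no real obstacle here; the statement is structured precisely so that it follows from Theorem \ref{spacelike} by a single substitution. The content of the corollary is conceptual rather than technical: it identifies \textbf{(MMA2)} as the natural multi-marginal analogue of the two marginal non-degeneracy condition \textbf{(A2)}, in the sense that it yields the same $n$-dimensional rectifiability conclusion for $spt(\gamma)$ established in the two marginal setting by Theorem \ref{rectifiability}.
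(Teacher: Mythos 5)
Your proof is correct and matches the paper's intent exactly: the paper presents this corollary as an ``immediate application'' of Theorem \ref{spacelike}, and your substitution $\lambda_+ = (m-1)n$ into the bound $mn - \lambda_+$ to obtain $n$ is precisely that application. The surrounding commentary on maximality of $\lambda_+$ and the analogy with \textbf{(A2)} is accurate and consistent with the remarks in the paper.
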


\newtheorem{comp2marg}[rectifiability]{Remark}
\begin{comp2marg}
When $m=2$, the only $g \in G$, up to a positive multiplicative constant, is 
\begin{equation*}\bar g:=
\begin{bmatrix}
0 & D^2_{x_1x_2}c & \\
D^2_{x_2x_1}c  & 0& \\
\end{bmatrix}.
\end{equation*}
This is exactly the pseudo-metric introduced by Kim-McCann in the study of the covariant theory of the regularity of optimal maps \cite{KM}.  They noted that this $g$ has signature $(n,n,0)$ whenever $c$ is non-degenerate; therefore Theorem \ref{spacelike} generalizes Theorem \ref{rectifiability}.  In fact, Theorem \ref{spacelike} applies even when non-degeneracy fails, and so provides new information even in the two marginal case; in this case, the signature of $g$ is $(r,r,2n-2r)$, where $r$ is the rank of  $ D^2_{x_1x_2}c$ \cite{P}.
\end{comp2marg}

For $m \geq 3$, there is of course a lot of choice in the way we choose the $t_p$'s, and for a particular cost one can optimize the choice to get the best bound in Theorem \ref{spacelike}.  In our applications, we will mostly focus on the simplest case when all the $t_p$'s are taken to be the same, in which case we obtain (up to a multiplicative constant) the off diagonal part of the matrix of second derivatives of $c$, given in block form by
\begin{equation*}\bar g:=
 \begin{bmatrix}
0 & D^2_{x_1x_2}c & D^2_{x_1x_2}c&...& D^2_{x_1x_m}c\\
D^2_{x_2x_1}c  & 0&D^2_{x_2x_3}c&...&D^2_{x_2x_m}c \\
D^2_{x_3x_1}c  & D^2_{x_3x_2}c& 0&...&D^2_{x_3x_m}c\\
...  & ...&...&....&... \\
D^2_{x_mx_1}c  & D^2_{x_mx_2}c  &...&...&0 \\
\end{bmatrix}.
\end{equation*}
However, other choices can be useful; for example, one can show that if $m$ is even, for a generic cost the dimension of the support is no more than $\frac{mn}{2}$ \cite{P}.

Finally, let us note that condition \textbf{(MMA2)} is much more restrictive than \textbf{(A2)}, which will, roughly speaking, be satisfied by a generic cost $c$ at a generic point $(x_1,x_2)$.    On the other hand, \textbf{(MMA2)} implies negative definiteness of the symmetric part of $D^2_{x_ix_j}c[D^2_{x_kx_j}c]^{-1}D^2_{x_kx_i}c$ for all distinct $i,j,k$, which is certainly not generically true \cite{P4}.  In subsection 2.3, we will see several examples where \textbf{(MMA2)} fails, and the solution concentrates on sets with dimension larger than $n$.
\subsection{Uniqueness and graphical structure of optimal measures}
We now turn to the question of when the optimizer has  Monge, or graphical, structure.   For two marginal problems, the \textit{twist} condition suffices for this, and also implies uniqueness of the optimal measure.
\newtheorem*{twist}{(A1)}
\begin{twist}
\textbf{(Twist)} Assume that $c$ is semi-concave and that for each fixed $x_1$, the map
\begin{equation*}
x_2 \mapsto D_{x_1}c(x_1,x_2)
\end{equation*}
is injective on the subset $\{x_2: D_{x_1}c(x_1,x_2) \text{ exists }\} \subseteq M_2$ where $c$ is differentiable with respect to $x_1$.
\end{twist}

The following result is well known; versions of it can be found, in comparable generality, in Caffarelli \cite{Caf}, Gangbo \cite{G}, Gangbo and McCann \cite{GM} and Levin \cite{lev}.
\newtheorem{mapping}{Theorem}[subsection]
\begin{mapping}\label{mapping}
\textbf{(Monge solutions and uniqueness for two marginal problems)} Suppose $m=2$.  Assume that the first marginal $\mu_1$ is absolutely continuous with respect to local coordinates and that $c$ satisfies \textbf{(A1)}.  Then the optimal measure $\gamma$ is concentrated on the graph of a function $F: M_1 \rightarrow M_2$.  This mapping is a solution to Monge's problem, and the solutions to both Monge's problem and Kantorovich's are unique.
\end{mapping}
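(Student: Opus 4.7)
The plan is to invoke Kantorovich duality, which supplies Borel potentials $u_1: M_1 \to \mathbb{R}$, $u_2: M_2 \to \mathbb{R}$ satisfying $u_1(x_1)+u_2(x_2) \leq c(x_1,x_2)$ pointwise, with equality on $spt(\gamma)$ for every optimal $\gamma$. Replacing $u_1$ by its double $c$-transform if necessary, I may assume
\[
u_1(x_1) = \inf_{x_2 \in M_2}\bigl(c(x_1,x_2)-u_2(x_2)\bigr).
\]
The semi-concavity of $c$ assumed in \textbf{(A1)} passes to each slice $c(\cdot,x_2)$ with a uniform constant, hence $u_1$ inherits semi-concavity as a pointwise infimum of a uniformly semi-concave family.

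Next, absolute continuity of $\mu_1$ together with semi-concavity of $u_1$ forces $u_1$ to be differentiable $\mu_1$-a.e.\ (by Alexandrov's theorem, applied in local coordinates). At any such point $x_1$, if $x_2$ satisfies $(x_1,x_2)\in spt(\gamma)$, then $\tilde x_1 \mapsto c(\tilde x_1,x_2)-u_1(\tilde x_1)$ attains its minimum at $\tilde x_1 = x_1$, and the first-order condition yields
\[
D_{x_1}c(x_1,x_2) = Du_1(x_1).
\]
The twist hypothesis \textbf{(A1)} is precisely the injectivity of $x_2 \mapsto D_{x_1}c(x_1,x_2)$, so $x_2$ is uniquely determined by $x_1$. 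This defines a map $F:M_1\to M_2$ such that $spt(\gamma)$ lies in the graph of $F$ over a full-$\mu_1$-measure subset of $M_1$, forcing $\gamma = (\mathrm{id},F)_{\#}\mu_1$.

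For uniqueness, if $\gamma,\gamma'$ are both optimal then $\tfrac{1}{2}(\gamma+\gamma')$ is too, and is concentrated on a single graph by the argument above; since any set of $\tfrac{1}{2}(\gamma+\gamma')$-measure zero has both $\gamma$- and $\gamma'$-measure zero, each of $\gamma,\gamma'$ lives on this common graph, and having first marginal $\mu_1$, both equal $(\mathrm{id},\tilde F)_{\#}\mu_1$. This Kantorovich uniqueness, together with the equality of Kantorovich and Monge costs for graph measures, yields the Monge solution $\tilde F$ and its uniqueness. The main obstacle is the $\mu_1$-a.e.\ differentiability of $u_1$: this step uses both hypotheses essentially (semi-concavity of $c$ to transfer semi-concavity to $u_1$, and absolute continuity of $\mu_1$ to apply Alexandrov's theorem); once it is in hand, \textbf{(A1)} reduces the structural and uniqueness claims to a pointwise inversion.
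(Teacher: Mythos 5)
The paper does not actually present a proof of this theorem; it cites Caffarelli, Gangbo, Gangbo--McCann, and Levin as sources. Your argument is precisely the standard proof from that literature (and from Villani's books): obtain potentials from Kantorovich duality, upgrade $u_1$ to a $c$-transform so it inherits semi-concavity uniformly, use absolute continuity of $\mu_1$ to get a.e.\ differentiability, extract the first-order envelope condition $D_{x_1}c(x_1,x_2)=Du_1(x_1)$ on $spt(\gamma)$, invert via the twist hypothesis, and average two optimizers to obtain uniqueness. This all works, and it is the same route the paper's cited references take.

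One small misattribution: you invoke Alexandrov's theorem to conclude $\mu_1$-a.e.\ differentiability of $u_1$. Alexandrov's theorem gives second-order differentiability a.e.\ of convex (hence semi-concave) functions; what you actually need here is only first-order differentiability a.e., which follows from the fact that semi-concave functions are locally Lipschitz together with Rademacher's theorem. The conclusion you draw is correct, but the cited theorem is stronger than necessary. Two further points you implicitly gloss over, as is customary in this argument: that the existence of $Du_1(x_1)$ forces $D_{x_1}c(x_1,x_2)$ to exist at any $(x_1,x_2)\in spt(\gamma)$ (this follows because $c(\cdot,x_2)-u_1(\cdot)$ has a minimum at $x_1$ and $c(\cdot,x_2)$ is semi-concave, so it is squeezed at $x_1$ between a differentiable lower support and a $C^2$ upper paraboloid), and that the map $F$ defined by inverting the twist condition is Borel measurable (so that $(\mathrm{id},F)_\#\mu_1$ makes sense). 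Neither is a gap in the logic, but a fully rigorous write-up would mention them.
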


When $m \geq 3$, the most general known condition for Monge solutions and uniqueness was formulated with Kim \cite{KP2} and requires  the following notion:
$S \subset M_2 \times M_3 \times ...\times M_m$ is a \textit{splitting set} at $x_1 \in M_1$ if there exist functions $u_i :M_i \rightarrow \mathbb{R}$, for $i=2,3,...m$, such that

\begin{equation*}
\sum_{i=2}^m u_i(x_i) \leq c(x_1,...,x_m) 
\end{equation*}
with equality on $S$.
Splitting sets arise in optimal transport problems in connection with the duality theorem (see \cite{K} for a multi-marginal version); essentially, this shows that the set of all points which are optimally coupled to $x_1$ is a splitting set at $x_1$.  

Our multi-marginal version of the twist condition is the following.
\newtheorem*{twistss}{(MMA1)}
\begin{twistss}
\textbf{(Twist on splitting sets)}
Assume that $c$ is semi-concave and, whenever $S \subset M_2 \times M_3 \times ...\times M_m$ is a splitting set at a fixed $x_1 \in M_1$, the map

\begin{equation}
(x_2,...,x_m) \mapsto D_{x_1}c(x_1,x_2,...,x_m)
\end{equation}
is injective on the subset of $S$ on which $ D_{x_1}c$ exists, $S \cap Dom( D_{x_1}c)$.
\end{twistss}

Under this condition, we have the following analogue of Theorem \ref{mapping}, proved with Kim \cite{KP}.

\newtheorem{mmmapping}[mapping]{Theorem}
\begin{mmmapping}\label{mmmapping}
\textbf{(Monge solutions and uniqueness for multi-marginal problems)} Assume that $\mu_1$ is absolutely continuous with respect to local coordinates and that $c$ satisfies \textbf{(MMA1)}.  Then the optimal measure $\gamma$ is concentrated on the graph of a function $F =(F_2,F_3,...,F_m): M_1 \rightarrow M_2 \times M_3 \times ...\times M_m$.  This mapping is a solution to Monge's problem, and the solutions to both Monge's problem and Kantorovich's are unique.
\end{mmmapping}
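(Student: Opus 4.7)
The plan is to mirror the duality-based proof of Theorem \ref{mapping}, with the splitting-set condition playing the role that plain injectivity of $D_{x_1}c$ plays in the two marginal setting. First I would invoke the multi-marginal Kantorovich duality (see \cite{K}) to produce potentials $u_1, \ldots, u_m$ satisfying $\sum_{i=1}^m u_i(x_i) \leq c(x_1, \ldots, x_m)$ everywhere, with equality on $spt(\gamma)$ for any optimal $\gamma$. A standard iterated $c$-transform construction, combined with the semi-concavity of $c$ assumed in \textbf{(MMA1)}, lets one choose each $u_i$ semi-concave; together with the absolute continuity of $\mu_1$ and Rademacher's theorem, this guarantees that $u_1$ and $c(\cdot, x_2, \ldots, x_m)$ are differentiable in $x_1$ at $\mu_1$-almost every point.

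Next, fix such an $x_1$ and let $S_{x_1} := \{(x_2, \ldots, x_m) : (x_1, x_2, \ldots, x_m) \in spt(\gamma)\}$ denote the fiber of $spt(\gamma)$ over $x_1$. The duality identity immediately identifies $S_{x_1}$ as a splitting set at $x_1$, with $u_2, \ldots, u_m$ themselves playing the role of witnesses. For any $(x_2, \ldots, x_m) \in S_{x_1}$, the function $x_1' \mapsto c(x_1', x_2, \ldots, x_m) - u_1(x_1') - \sum_{i=2}^m u_i(x_i)$ is nonnegative and vanishes at $x_1' = x_1$, so wherever $D_{x_1}c$ exists it must agree with $Du_1(x_1)$. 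Condition \textbf{(MMA1)} applied to $S_{x_1}$ therefore forces $(x_2, \ldots, x_m)$ to be uniquely determined by $x_1$, so $spt(\gamma)$ lies above the graph of a map $F = (F_2, \ldots, F_m)$ on a full $\mu_1$-measure subset of $M_1$; a standard measurable selection yields measurability of $F$, producing a solution to \eqref{mmm}.

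For uniqueness, a familiar convexity argument applies: if $\gamma$ and $\gamma'$ are both optimal in \eqref{mmk}, then so is $\tfrac{1}{2}(\gamma + \gamma')$, and the previous step forces this average, and hence each of $\gamma, \gamma'$, to concentrate on the graph of a single map over $M_1$. Since any such measure has the form $(Id, F_2, \ldots, F_m)_\# \mu_1$ and $F$ is determined $\mu_1$-almost everywhere by the construction above, one concludes $\gamma = \gamma'$ and that the Monge minimizer is also unique.

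The main technical obstacle is verifying that the fiber $S_{x_1}$ genuinely qualifies as a splitting set in the sense required by \textbf{(MMA1)} for $\mu_1$-almost every $x_1$; formally this is immediate from duality, but in practice it requires enough regularity of the $u_i$ (upper semi-continuity, measurability, and the full $c$-cyclical monotonicity / $c$-transform apparatus recorded in \cite{K}) to justify the almost everywhere differentiation and to ensure that the potentials inherit semi-concavity from $c$. Once these points are secured, the remainder is a direct transplant of the two marginal argument, with the single novelty being the use of splitting-set twist in place of plain twist — precisely the weakening that makes \textbf{(MMA1)} the natural multi-marginal analogue of \textbf{(A1)}.
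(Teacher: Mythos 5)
Your proof is correct and follows essentially the same duality-based route as the cited reference \cite{KP}: extract semi-concave Kantorovich potentials, observe that the fiber of $spt(\gamma)$ over a differentiability point of $u_1$ is a splitting set on which $D_{x_1}c$ is pinned to $Du_1(x_1)$ by the envelope argument, and then let \textbf{(MMA1)} collapse that fiber to a single point, with uniqueness following from the usual convexity trick. The one place worth tightening in a written version is the differentiability of $c(\cdot, x_2,\ldots,x_m)$ at the chosen $x_1$: rather than merely restricting to ``wherever $D_{x_1}c$ exists,'' note that semi-concavity of $c$ plus differentiability of $u_1$ at $x_1$ forces the superdifferential of $c(\cdot,x_2,\ldots,x_m)$ at $x_1$ to be the singleton $\{Du_1(x_1)\}$, so $D_{x_1}c$ automatically exists on the entire fiber, which is what lets \textbf{(MMA1)} bite.
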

Let us mention as well the recent paper of Moameni, which shows that under a local version of  \textbf{(MMA1)}, one can show that $spt(\gamma)$ is concentrated on  the union of several graphs \cite{moameni2}.

The twist on splitting sets is complicated and difficult to verify directly.  There are, however, a number of known classes of examples which satisfy it (many of these are described in the following subsection), as well as sufficient (but not necessary) local differential conditions on the cost, which we formulate now.  

\subsubsection{Local differential conditions for twistedness  on splitting sets}
Let $M_i \subseteq \R^n$ for each $i=1, ..., m$.\footnote{The conditions in this section can actually be developed on somewhat more general spaces (essentially subsets on $\mathbb{R}^n$, but with non-Euclidean metrics); see \cite{P1}.} The most restrictive part of the differential conditions is based on the following tensor.



\newtheorem{tensor}[mapping]{Definition}

\begin{tensor}
Suppose $c$ is $(1,m)$-non-degenerate (that is, the matrix $D^2_{x_1x_m}c $ is non-singular.) Let $y=(y_1,y_2,...,y_m) \in M_1 \times M_2 \times...\times M_m$.  For each $i:=2,3,...,m-1$ choose a point $y(i)=(y_1(i),y_2(i),...,y_m(i)) \in \overline{M_1} \times \overline{M_2} \times...\times \overline{M_m}$ such that $y_i(i)=y_i$.  Define the following bi-linear maps on $T_{y_2}M_2 \times T_{y_3}M_3 \times ...\times T_{y_{m-1}}M_{m-1}$:

\begin{equation*}
 S_{y}=-\sum_{j=2}^{m-1} \sum_{\substack {i=2 \\ i \neq j}}^{m-1}D^2_{x_ix_j}c(y) +\sum_{i,j=2}^{m-1}(D^2_{x_ix_m} c \, (D^{2}_{x_1x_m}c)^{-1}\, D^2_{x_1x_j}c)(y)
\end{equation*}

\begin{equation*}
 H_{y,y(2),y(3),...,y(m-1)}=\sum_{i=2}^{m-1}( D^2_{x_i x_i} c(y (i))- D^2_{x_ix_i}c(y))
\end{equation*}

\begin{eqnarray*}
T_{y,y(2),y(3),...,y(m-1)}=S_{y}+H_{y,y(2),y(3),...,y(m-1)}
\end{eqnarray*}
\end{tensor}
The main condition we will impose is negative definiteness of the tensor $T$, for all choices of the $y,y(2),y(3),...,y(m-1)$; a structural condition on the domains, defined in terms of the following set, is needed as well.

\newtheorem{domains}[mapping]{Definition}
\begin{domains}\label{domains}
 Let $x_1 \in M_1$ and $p_1 \in T^*_{x_1}M_1$.  We define $Y^c_{x_1,p_1} \subseteq M_2 \times M_3 \times ...\times M_{m-1}$ by
\begin{equation*}
 Y^c_{x_1,p_1} =\{(x_2,x_3,...,x_{m-1}) |\text{ } \exists\text{ } x_m \in M_m \textit{ s.t. }  D_{x_1}c(x_1,x_2,...,x_m) = p_1\}
\end{equation*}

\end{domains}

These conditions were introduced in \cite{P1}, where it was proven that they imply Monge solution and uniqueness results for the multi-marginal problem, before the twist on splitting set condition had been formulated.  The following result asserts that they indeed imply the twist on splitting sets condition; a proof can be found in \cite{KP2}.

\newtheorem{diffcond}[mapping]{Proposition}
\begin{diffcond}\label{diffcond}
(Sufficient differential conditions) 

Suppose that:
\begin{enumerate}
\item $c$ is $(1,m)$-non-degenerate; that is,  $D^2_{x_1x_m}c $ is non-singular everywhere.
\item $c$ is $(1,m)$-twisted; that is, $x_m \mapsto D_{x_1}c(x_1,x_2,...x_m)$  is injective for each fixed fixed $x_1,...x_{m-1}$.
\item For all choices of $y=(y_1,y_2,...,y_m) \in M_1 \times M_2 \times...\times M_m$ and of $y(i)=(y_1(i),y_2(i),...,y_m(i)) \in \overline{M_1}\times \overline{M_2} \times...\times \overline{M_m}$ such that $y_i(i)=y_i$ for $i=2,...,m-1$, we have 

\begin{equation*}
T_{y,y(2),y(3),...,y(m-1)} < 0. \footnote{Note that the matrix $T_{y,y(2),y(3),...,y(m-1)}$ need not be symmetric; the condition $T_{y,y(2),y(3),...,y(m-1)} < 0$ means that $V^T\cdot T_{y,y(2),y(3),...,y(m-1)} \cdot V< 0$ for all $V$, or, equivalently, that the symmetric part of $T_{y,y(2),y(3),...,y(m-1)}$ is negative definite.}
\end{equation*}
\item For all $x_1 \in M_1$ and $p_1 \in T^*_{x_1}M_1$, $Y_{x_1,p_1}^c$ is  convex.
\end{enumerate}
Then $c$ is twisted on splitting sets.
\end{diffcond}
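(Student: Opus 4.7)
The plan is to argue by contradiction. Assume $c$ fails the twist on splitting sets at some $x_1 \in M_1$: that is, for some splitting set $S$ at $x_1$ with associated functions $u_2,\ldots,u_m$, there exist distinct $y=(y_2,\ldots,y_m)$ and $\bar y=(\bar y_2,\ldots,\bar y_m)$ in $S\cap\mathrm{Dom}(D_{x_1}c)$ with $D_{x_1}c(x_1,y)=D_{x_1}c(x_1,\bar y)=:p_1$. The goal is to derive a contradiction from hypothesis (3).

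First, I would use hypotheses (1) and (2) to solve $D_{x_1}c(x_1,x_2,\ldots,x_m)=p_1$ smoothly for $x_m=X_m(x_2,\ldots,x_{m-1})$ on the domain $Y^c_{x_1,p_1}$, which is convex by (4). The $(1,m)$-twist forces the projections $(y_2,\ldots,y_{m-1})$ and $(\bar y_2,\ldots,\bar y_{m-1})$ to differ, so by convexity I may join them by the line segment $P(t)=(1-t)(y_2,\ldots,y_{m-1})+t(\bar y_2,\ldots,\bar y_{m-1})$, $t\in[0,1]$, and lift to a curve $Q(t)=(P(t),X_m(P(t)))$ lying in the fiber.

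Next I would examine the splitting deficit
\[
\phi(t):=c(x_1,Q(t))-\sum_{i=2}^m u_i(Q_i(t)),
\]
which is nonnegative on $[0,1]$ by the splitting inequality and satisfies $\phi(0)=\phi(1)=0$ by the equality on $S$. Hence $t=0$ is a minimum of $\phi$, and the first-order identities $Du_i(y_i)=D_{x_i}c(x_1,y)$ (which hold on $S$) give $\phi'(0)=0$ and so $\phi''(0)\geq 0$. Differentiating twice, using the implicit formula $v_m=-(D^2_{x_1 x_m}c)^{-1}\sum_{j=2}^{m-1}D^2_{x_1 x_j}c\cdot v_j$ for the tangent $v_m=\dot X_m(0)$ (with $v_i=\bar y_i-y_i$ for $i\leq m-1$), and cancelling the $\ddot X_m$ contributions via the first-order conditions, I obtain
\[
\phi''(0)=\sum_{i,j=2}^m v_i^\top D^2_{x_i x_j}c(x_1,y)\,v_j-\sum_{i=2}^m v_i^\top D^2u_i(y_i)\,v_i.
\]

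The final step is to match $\phi''(0)$ with the tensor $T$. The $c$-concavity of each splitting potential $u_i$ supplies upper bounds $D^2u_i(y_i)\leq D^2_{x_i x_i}c(y(i))$ for auxiliary points $y(i)$ with $y_i(i)=y_i$ that realize the infimum in the corresponding $c$-transform. Taking $y(m)=(x_1,y)$ absorbs the $v_m^\top D^2_{x_m x_m}c\,v_m$ contribution, and substituting the implicit formula for $v_m$ into the cross terms $v_i^\top D^2_{x_i x_m}c\,v_m$ reproduces exactly the mixed-derivative combinations in $S_y$, while the remaining diagonal pieces form $H_{y,y(2),\ldots,y(m-1)}$. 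One thus identifies $\phi''(0)$ with (minus) $T_{y,y(2),\ldots,y(m-1)}$ applied to $(v_2,\ldots,v_{m-1})$. Hypothesis (3) then gives $\phi''(0)<0$, contradicting $\phi''(0)\geq 0$, and twist on splitting sets follows. The main obstacle is this last identification: it is a delicate algebraic accounting that simultaneously uses the implicit-function formula afforded by $(1,m)$-non-degeneracy and $(1,m)$-twist, convex interpolation in $Y^c_{x_1,p_1}$, the $c$-concavity of the potentials, and an astute choice of the auxiliary points $y(i)$ to match precisely the combinatorics of $S_y+H_{y,y(2),\ldots,y(m-1)}$.
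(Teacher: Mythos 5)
Your geometric scaffolding is in the right spirit and does track the route taken in the cited proof: invoke hypotheses (1) and (2) to parameterize the level set of $D_{x_1}c$ by $(x_2,\ldots,x_{m-1})$ via the implicit function theorem, note that the $(1,m)$-twist forces the projections of $y$ and $\bar y$ to differ, use convexity (4) to join them by a segment, lift it to a curve $Q(t)$ in the fiber, and seek a second-order contradiction from the tensor $T$. The way you try to close the argument, however, is wrong in two essential places.

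First, the sign and the logic at $t=0$. You write that $\phi''(0)$ is "(minus) $T(v,v)$" and then that "(3) gives $\phi''(0)<0$"; but if $T<0$ then $-T(v,v)>0$, so those two sentences are mutually inconsistent. More to the point, $\phi''(0)$ is in fact \emph{nonnegative} under the hypotheses, and typically strictly positive, so there is no pointwise contradiction to extract at $t=0$. Test it on $c=h(\sum_i x_i)$ with $h$ uniformly concave, which the paper records as satisfying these conditions: here every mixed second derivative is $D^2h$, the implicit tangent is $v_m=-\sum_{j=2}^{m-1}v_j$, so $\sum_{i,j=2}^m v_i^\top D^2_{x_ix_j}c\,v_j = (\sum_i v_i)^\top D^2h\,(\sum_j v_j)=0$; hence $\phi''(0) = -\sum_{i=2}^m v_i^\top D^2u_i(y_i)\,v_i \geq -\sum_i v_i^\top D^2h\,v_i>0$, using exactly the $c$-transform bound $D^2u_i\leq D^2_{x_ix_i}c(y(i))=D^2h$ that you yourself invoke. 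In other words, that bound produces a \emph{lower} bound on $\phi''(0)$, not an upper bound, and it points toward positivity.

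Second, and more fundamentally, a pointwise second-order test at $t=0$ cannot deliver the contradiction even if the signs were arranged favorably. What you actually have is $\phi\geq 0$ with $\phi(0)=\phi(1)=0$; the contradiction must come from showing that $\phi$, or a $C^2$ minorant $\psi\leq\phi$ with $\psi(0)=0$ and $\psi'(0)=0$, is \emph{strictly convex on the whole interval} $[0,1]$, since a strictly convex function vanishing at both endpoints must be strictly negative in between, violating $\phi\geq 0$. Getting a strict lower bound on the second derivative for \emph{all} $t\in[0,1]$ is exactly where the full strength of hypothesis (3) enters -- $T<0$ must hold at every base point along the lifted curve, with auxiliary points ranging over the closures $\overline{M_i}$ -- and you never use the universal quantifier over base points, nor the closures. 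Passing to a $C^2$ surrogate $\psi$ is also unavoidable on regularity grounds: the $u_i$ are only semi-concave, so neither $\phi''(0)$ nor the first-order identity $Du_i(y_i)=D_{x_i}c(x_1,y)$ is justified as you state them. The proof in \cite{KP2} replaces each $u_i$ by the smooth comparison $x_i\mapsto c(y(i);x_i)-\text{const}$ touching it from above at $y_i$ (and $u_m$ by $c(x_1,y_2,\ldots,y_{m-1},\cdot)-\text{const}$), obtains a $C^2$ function $\psi\leq\phi$, and derives the contradiction $\psi(1)>0\geq\phi(1)=0$ from strict convexity of $\psi$. Your sketch names these comparison functions but deploys them only as a one-point Hessian bound on $u_i$, which both assumes regularity you do not have and fails to set up the convexity argument that actually drives the proof.
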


Let us remark that an example in \cite{P13a} demonstrates that these conditions are not necessary for the twist on splitting set condition.  Some examples of cost functions satisfying these conditions can be found in sections 2.3.2 and 2.3.5 below; additional examples can be found in \cite{P4}.

\subsection{Examples}
Here we illustrate the theory from the previous few subsections by studying several examples.
\subsubsection{Marginals with one dimensional support}
Naturally, the simplest multi-marginal problems occur when the marginals are supported on intervals $M_i=(a_i,b_i)  \subset \mathbb{R}$.  It it well worth studying the one dimensional case, as a lot of intuition can be gleaned from it which also applies to higher dimensional problems.

In this setting, the non-degeneracy condition \textbf{(A2)} amounts to the condition $c_{x_1x_2}:=\frac{\partial ^2 c}{\partial x_1 \partial x_2} \neq 0$, which means either $c_{x_1x_2} > 0$ or $c_{x_1x_2} < 0$ (this condition also implies \textbf{(A1)}).  In this case, it is well known that the one dimensional set in Theorem \ref{rectifiability} above is either monotone increasing (if  $c_{x_1x_2} < 0$) or monotone decreasing  (if  $c_{x_1x_2} >0$).  

For multi-marginal problems, the interaction between the various mixed second order partials becomes relevant.  We say that the cost $c$ is \textit{compatible} if  
\begin{equation*}
c_{x_ix_j} (c_{x_k x_j} )^{-1}c_{x_kx_i} <0
\end{equation*}
everywhere, for all distinct $i,j,k$.  Monge solution and uniqueness results for compatible costs were essentially established in\cite{C}\footnote{This paper actually focused on submodular costs, which essentially means $ c_{x_ix_j} <0$ for all distinct $i,j$; compatibility is equivalent to submodularity, up to a change of variables; see \cite{P4}.}, while an alternate argument, similar in spirit to Theorem \ref{spacelike} was developed in \cite{P4}; in fact, a rearrangement inequality that is essentially equivalent can be traced back to Lorenz \cite{lorentz}.  The arguments in \cite{C} and \cite{P4} essentially showed that a splitting set $S$ must be $c$-monotone, in the sense that if both $(x_2,...,x_m), (y_2,...,y_m) \in S$, then $(x_i-y_i)c_{x_ix_j} (x_j-y_j) \leq 0$ for all $i \neq j$.  We can then show that compatibility implies \textbf{(MMA1)}, placing these results within the framework of the previous subsection:
\newtheorem{onedmonge}{Theorem}[subsubsection]
\begin{onedmonge}\label{onedmonge}
Suppose $c$ is compatible.  Then it satisfies \textbf{(MMA1)}.
\end{onedmonge}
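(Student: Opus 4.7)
I would prove this by reducing to the case where $c$ is \emph{submodular}, meaning $c_{x_ix_j}<0$ for all distinct $i,j$, and then combining the $c$-monotonicity of splitting sets (recalled just above the statement) with the fundamental theorem of calculus applied to the equality $D_{x_1}c(x_1,x_2,\dots,x_m)=D_{x_1}c(x_1,y_2,\dots,y_m)$ on a splitting set.

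The first step is to observe that, because compatibility requires $c_{x_kx_j}$ to be invertible everywhere, each $c_{x_ix_j}$ is nowhere zero, so on the connected interval domain the sign $\sigma_{ij}:=\mathrm{sign}\,c_{x_ix_j}$ is constant.  Compatibility then becomes the triangular identity $\sigma_{ij}\sigma_{jk}\sigma_{ki}=-1$ for all distinct $i,j,k$.  Setting $\epsilon_1:=1$ and $\epsilon_i:=-\sigma_{1i}$ for $i\geq 2$, the triangular identity (with $k=1$) yields $\sigma_{ij}=-\epsilon_i\epsilon_j$ for every $i\neq j$.  The coordinatewise reflection $x_i\mapsto\epsilon_i x_i$ multiplies each mixed partial $c_{x_ix_j}$ by $\epsilon_i\epsilon_j$, producing a submodular cost.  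Splitting sets, semi-concavity, and the twist-on-splitting-sets condition are all invariant under such reflections (dual potentials simply pre-compose with the reflections), so it suffices to establish \textbf{(MMA1)} under the assumption of submodularity.

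Under submodularity, the $c$-monotonicity of a splitting set $S$ at $x_1$ becomes $(x_i-y_i)(x_j-y_j)\geq 0$ for all $i\neq j$ in $\{2,\dots,m\}$ and all $(x_2,\dots,x_m),(y_2,\dots,y_m)\in S$, so every coordinate difference $x_j-y_j$ carries a common sign.  To conclude, suppose both points satisfy the equality of $D_{x_1}c$-values.  By the fundamental theorem of calculus along the segment joining them,
\begin{equation*}
0=\sum_{j=2}^m(x_j-y_j)\int_0^1 c_{x_1x_j}\bigl(x_1,y_2+t(x_2-y_2),\dots,y_m+t(x_m-y_m)\bigr)\,dt;
\end{equation*}
each integral is strictly negative and every $(x_j-y_j)$ has the same sign, so all the summands carry a common sign and their vanishing forces $x_j=y_j$ for each $j$.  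This gives injectivity of $(x_2,\dots,x_m)\mapsto D_{x_1}c(x_1,x_2,\dots,x_m)$ on $S$, i.e.\ \textbf{(MMA1)}.

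The main (but mostly bookkeeping) obstacle is the reduction step: one must justify the global sign-constancy and the triangular identity for $\sigma_{ij}$, and verify that splitting sets transform correctly under the coordinatewise reflection.  Once that is in place, the conclusion is a clean sign-combination argument built on the $c$-monotonicity inequality from \cite{C,P4}.
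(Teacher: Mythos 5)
Your proof is correct and rests on the same two pillars as the paper's argument --- the fundamental theorem of calculus applied to the equality $D_{x_1}c(x_1,x)=D_{x_1}c(x_1,y)$ along the segment joining $x$ and $y$, together with the $c$-monotonicity of splitting sets --- but the packaging differs. The paper works with compatibility directly: it multiplies the FTC identity by $(x_2-y_2)c_{x_1x_2}(x(0))$, shows the $j=2$ term is nonnegative because $c_{x_1x_2}$ has constant sign, and controls each $j\geq 3$ term by inserting a factor $c_{x_jx_2}(x)/c_{x_jx_2}(x)$ so that one piece is negative by compatibility and the other is nonpositive by $c$-monotonicity; the argument is then repeated for each index. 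You instead dispose of the sign bookkeeping up front by the reflection reduction $x_i\mapsto\epsilon_i x_i$ with $\epsilon_i=-\sigma_{1i}$, which the paper relegates to a footnote citing \cite{P4}. Your derivation of $\sigma_{ij}=-\epsilon_i\epsilon_j$ from the triangular identity is correct (using $\sigma_{ij}=\sigma_{ji}$), the claim that $c$-monotonicity, splitting sets and semi-concavity are reflection-invariant is right (and $\epsilon_1=1$ ensures $D_{x_1}$ is unaffected), and once submodularity holds the $c$-monotonicity inequality $(x_i-y_i)(x_j-y_j)\geq 0$ forces all coordinate differences to share a sign, so the FTC sum of strictly negative integrals against same-sign factors vanishes only if every difference is zero. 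The only thing to be careful about, which you flag, is that sign-constancy of the $c_{x_ix_j}$ requires the domain to be connected --- automatic here since the $M_i$ are intervals. The net gain of your route is a single uniform sign argument treating all coordinates at once, at the price of verifying the invariance of the relevant structures under reflections; the paper's route avoids the change of variables but repeats the multiplication trick once per coordinate.
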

\begin{proof}
Suppose $S \subseteq M_2 \times...\times M_m$ is a splitting set at $x_1$  and $x=(x_2,...,x_m),y= (y_2,...,y_m) \in S$ with 
\begin{equation}\label{firstorder}
\frac{\partial c}{\partial x_1}(x_1,x_2,...,x_m) =\frac{\partial c}{\partial x_1}(x_1,y_2,...y_m) .
\end{equation}
We need to show $x=y$.  Letting 
\begin{equation*}
x(s) =(x_1, sx_2 + (1-s)y_2,...,sx_m + (1-s)y_m),
\end{equation*}
for  $ s \in [0,1]$, the equality \eqref{firstorder} can be written as
\begin{equation*}
\int_0^1\sum_{i=2}^mc_{x_1x_i}(x(s))(x_i-y_i)ds =0
\end{equation*}
Now, multiplying this by $(x_2-y_2)c_{x_1x_2}(x(0))$, we have
\begin{eqnarray*}
&&(x_2-y_2)^2c_{x_1x_2} (x(0))\int_0^1c_{x_1x_2} (x(s))ds\\
&&\hspace{0.75 in}+(x_2-y_2)c_{x_1x_2} (x(0))\int_0^1\sum_{i=3}^mc_{x_1x_i} (x(s))(x_i-y_i)ds =0\\
\end{eqnarray*}
or
\begin{eqnarray*}
&&(x_2-y_2)^2c_{x_1x_2}(x(0))\int_0^1c_{x_1x_2}(x(s))ds\\
&&\hspace{0.75 in}+\int_0^1\sum_{i=3}^m(x_2-y_2)c_{x_1x_2} (x(0))    \frac{  c_{x_ix_2} (x)    }{c_{x_ix_2}(x) }    c_{x_1x_i} (x(s))(x_i-y_i)ds =0\\
\end{eqnarray*}
Now note that the first term on the left hand side above is clearly non-negative, as $c_{x_1x_2}$ cannot change sign by the compatibility condition (a change of sign would imply  a zero of $c_{x_1x_2}$, which would violate the strict inequality in the compatibility condition).  On the other hand, as $   \frac{  c_{x_1x_2} (x)  }{c_{x_ix_2}(x) }    c_{x_1x_i} (x(s)) <0$ by compatibility and   $(x_2-y_2)c_{x_ix_2}(x)  (x_i-y_i)\leq 0$ by the splitting set property, the remaining terms are nonnegative as well.  Thus, the only possibility is that all terms are zero, in particular, considering the first term, this implies $x_2=y_2$. A similar argument implies $x_i=y_i$ for $i=3,...m$, which implies the twist on splitting set property.
\end{proof}
\newtheorem{comprem}[onedmonge]{Remark }
\begin{comprem}
(\textbf{Heuristic interpretation of compatibility}) Looking at the signs of $c_{x_ix_j}$, we can guess, based on what is known about the $m=2$ case whether the correlation between $x_i$ and $x_j$ should be positive (if $c_{x_ix_j}<0$, corresponding to an increasing graph) or negative (if $c_{x_ix_j}>0$ corresponding to a decreasing graph).  The compatibility condition ensures that these guesses are consistent; from this perspective it is not surprising that in this case we get Monge type solutions (with correlations as predicted by the signs of the $c_{x_ix_j}$).  On the other hand, when compatibility fails, the competing interactions can cause the solution to concentrate on higher dimensional sets.  For example, the cost $c(x_1,x_2,x_3) =(x_1+x_2+x_3)^2$ is not compatible; looking at the pairwise interactions $ c_{x_ix_j}$ might lead us to expect pairwise monotone decreasing relationships of the variables.  However, as  monotone decreasing dependences of $(x_1,x_2)$ and $(x_1,x_3)$ together imply a  monotone \emph{increasing} dependence of $(x_2,x_3)$, this is not consistent.  This cost allows optimizers with $2$-d support; as the cost is $0$ on the plane $x_1+x_2+x_3=0$ and positive elsewhere, it is clear that any measure concentrated on this plane is optimal for its marginals.
\end{comprem}
When $m=3$, an easy calculation shows that compatibility is equivalent to \textbf{(MMA2}); for larger $m$ it is necessary but I do not know if it is sufficient.  It is interesting that these last two facts have very natural generalizations to higher dimensions; for $m=3$, \textbf{(MMA2}) is equivalent to $D^2_{x_2x_1}c[D^2_{x_3x_1}c]^{-1}D^2_{x_3x_2}c<0$ while for larger $m$ the condition $D^2_{x_ix_j}c[D^2_{x_kx_j}c]^{-1}D^2_{x_kx_i}c<0$ for all distinct $i,j,k$ is necessary (but possibly not sufficient) for \textbf{(MMA2}) \cite{P4}.

\subsubsection{Functions on the sum}
Suppose each $M_i \subseteq \mathbb{R}^n$ and $c(x_1,x_2,...,x_m) = h(\sum_{i=1}^m x_i)$ for some $C^2$ smooth $h: \mathbb{R}^n \rightarrow \mathbb{R}$ .   Then each $D^2_{x_ix_j}c =D^2h$, so 
\begin{equation*}\bar g=
 \begin{bmatrix}
0 & D^2h & D^2h&...& D^2h\\
D^2h & 0&D^2h&...&D^2h \\
D^2h  & & 0&...&D^2_h\\
...  & ...&...&....&... \\
D^2h  & D^2h  &...&...&0 \\
\end{bmatrix}
\end{equation*}
It is then an easy exercise to determine the signature of $\bar g$ in terms of the signature of $D^2h$ (see \cite{P}\cite{P4} for the calculation and general formula).  When  $h$ is uniformly concave, $D^2h<0$, the signature of $\bar g$ turns out to be $((m-1)n,n,0)$ and so $c$ satisfies \textbf{(MMA2)}.   Furthermore, $c$ also satisfies \textbf{(MMA1)}; one can show this either by computing the local differential conditions  in Proposition \ref{diffcond} (see the calculation in \cite{P4}) or by noticing that $h(\sum_{i=1}^m x_i)=\inf_z \sum_{i=1}^m x_i \cdot z -h^*(z)$ is of the infimal convolution form discussed below in subsection \ref{infcon}, and so \textbf{(MMA1)} follows from Proposition \ref{infcosttwist}.  


Historically, concave functions of the sum were among the first multi-marginal cost functions to be studied.  Partial results for the special case $c=-|(\sum_{i=1}^m x_i)|^2$, or equivalently $\sum_{i=1}^m|x_i-x_j|^2$ can be traced back to Olkin and Rachev \cite{OR}, Knott and Smith \cite{KS} and Ruschendorf and Uckelmann \cite{RU}.  Full Monge solution and uniqueness results for this cost were then proven by Gangbo and Swiech \cite{GS}, while the extension to general strictly concave $h$ is due to Heinich \cite{H}.  All of this took place long before the general condition \textbf{(MMA1)} had been developed, but, from our perspective, one can view the Gangbo-Swiech proof as verifying the twist on splitting sets condition for this cost, and also serving as a pioneering, general prototype this type of argument.
\newtheorem{highdconvex}{Remark}[subsubsection]
\begin{highdconvex}\label{highdconvex}
\textbf{(High dimensional solutions for convex costs)} On the other hand, if $h$ is uniformly convex, so $D^2h>0$, then the twist on splitting sets condition fails.  In addition, the calculation in \cite{P4} shows that $\bar g$ has signature  $(n,(m-1)n,0)$, and so Theorem \ref{spacelike}  guarantees only that the solution is concentrated on a set of dimension at most $(m-1)n$.    In fact, this estimate is sharp; as is shown in \cite{P}, any measure concentrated on the set $\{\sum_{i=1}^mx_i=a\}$, for some constant $a \in \mathbb{R}^n$ is optimal for its marginals; furthermore, as is shown in \cite{P4} in these high dimensional cases, the solution may also be non unique, as on a higher dimensional surface there can be enough wiggle room to construct more than one measure with common marginals. This  represents a significant divergence from the two marginal theory, where uniqueness and $n$-dimensional solutions are much more generic; note, for example, that when $m=2$, the cost $h(x_1 +x_2)$ for uniformly convex $h$ satisfies both \textbf{(A1)} and \textbf{(A2)}.
\end{highdconvex}
\subsubsection{Radially symmetric problems}
An interesting class of examples is those for which $M_i \subset \mathbb{R}^n$ for each $i$ and the cost function is radially symmetric; that is, $c(Ax_1,Ax_2,....,Ax_m) =c(x_1,x_2,...x_m)$ for all rotations $A \in SO(n)$.  This class includes the Gangbo-Swiech cost $\sum_{i=1}^m|x_i-x_j|^2$ \cite{GS}, the determinant cost of Carlier and Nazaret \cite{CN}, $-det(x_1x_2...x_m)$ (when $m=n$, so that the matrix $(x_1x_2...x_m)$ is square) and the Coulomb cost $\sum_{i \neq j}\frac{1}{|x_i-x_j|}$ \cite{CFK}\cite{bdpgg}.

A measure $\mu$ is called radially symmetric if $A_{\#} \mu =\mu$ for all $A \in SO(n)$.  The theorem below was first proven for the determinant cost function in\cite{CN} and a proof of the general case can be found in \cite{P13b}; both of these arguments yield explicit constructions of the optimal $\gamma$.  Interesting, similar results can be proven using ergodic theory \cite{moameni}.

\newtheorem{radial}{Theorem}[subsubsection]
\begin{radial}\label{radial}
Assume $c$ and each $\mu_i$ is radially symmetric.  Then there is an optimal measure $\gamma$ in \eqref{mmk} such that:
\begin{enumerate}
\item $\gamma$ is radially symmetric; that is, $(A,A,...,A)_{\#}\gamma =\gamma$ for all $A \in SO(n)$.
\item For each $(x_1,x_2,...x_m) \in spt(\gamma)$, we have 
\begin{equation}\label{spheremin}
(x_1,x_2,...x_m) \in argmin_{|y_i|=r_i; i=1,2,...m}c(y_1,y_2,...,y_m),
\end{equation}
where $r_i =|x_i|$.
\end{enumerate}
\end{radial}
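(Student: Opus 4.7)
My plan is in two stages: first, to produce a radially symmetric optimizer by averaging over $SO(n)$; second, to replace its conditional measures over each tuple of radii by measures supported on the spherewise minimizers of $c$.

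For the first stage, let $\gamma_0 \in \Pi(\mu_1,\ldots,\mu_m)$ be any optimizer (existence is standard). Writing $\mathbf{A} := (A,A,\ldots,A)$ for $A \in SO(n)$, I would define
\[
\gamma := \int_{SO(n)} \mathbf{A}_\# \gamma_0 \, dA
\]
against the Haar probability measure $dA$. Radial symmetry of each $\mu_i$ gives $\mathbf{A}_\# \gamma_0 \in \Pi(\mu_1,\ldots,\mu_m)$ for every $A$, and radial symmetry of $c$ gives $\int c\, d(\mathbf{A}_\# \gamma_0) = \int c\, d\gamma_0$, so $\gamma$ is itself admissible and optimal. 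Translation invariance of the Haar measure then yields $\mathbf{B}_\#\gamma = \gamma$ for every $B \in SO(n)$, which is (1).

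For the second stage, write $\rho(x_1,\ldots,x_m) := (|x_1|,\ldots,|x_m|)$ and, for $\vec r \in [0,\infty)^m$, set $K(\vec r) := argmin\{c(y) : |y_i|=r_i \text{ for all } i\}$, with $m(\vec r)$ the corresponding minimum value. The set $K(\vec r)$ is nonempty and compact by continuity of $c$, and invariant under the diagonal $SO(n)$-action by radial symmetry of $c$. A measurable selection theorem (e.g.\ Kuratowski--Ryll-Nardzewski) supplies a Borel map $\vec r \mapsto p(\vec r) \in K(\vec r)$; pushing forward the Haar measure along $A \mapsto \mathbf{A}\, p(\vec r)$ produces a radially symmetric probability measure $\nu_{\vec r}$ supported on $K(\vec r)$. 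Setting
\[
\tilde\gamma := \int_{[0,\infty)^m} \nu_{\vec r}\, d(\rho_\# \gamma)(\vec r),
\]
one obtains a radially symmetric measure concentrated on spherewise minimizers, so (2) holds on $spt(\tilde\gamma)$.

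The technical crux is checking that $\tilde\gamma$ is still admissible. For this I would exploit the fact that a radially symmetric measure on $\mathbb{R}^n$ is uniquely determined by its radial push-forward: the push-forward of $\tilde\gamma$ under $x \mapsto |x_i|$ coincides with that of $\rho_\#\gamma$ under $\vec r \mapsto r_i$, which in turn equals the radial push-forward of the (radially symmetric) $\mu_i$, so the $i$-th marginal of $\tilde\gamma$ must be $\mu_i$. Once admissibility is in hand, $\int c\, d\tilde\gamma = \int m(\vec r)\, d(\rho_\#\gamma)(\vec r) \leq \int c\, d\gamma$, and optimality of $\gamma$ forces equality, so $\tilde\gamma$ is optimal. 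Beyond the measurable selection, the only real subtlety to watch is degeneracies where $K(\vec r)$ is not a single diagonal orbit (e.g.\ when some $r_i=0$), but these can be absorbed by choosing $p$ compatibly.
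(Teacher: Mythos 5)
Your construction is correct: averaging over $SO(n)$ gives a radially symmetric optimizer, and replacing the conditional measures over radial tuples $\vec r$ by the orbit measure of a measurable selection from $K(\vec r)$ preserves the marginals (since a radially symmetric measure is determined by its radial law) and can only decrease the cost, forcing equality. The paper itself does not present a proof but defers to Carlier--Nazaret and \cite{P13b}, which it describes as giving explicit constructions of the optimizer; your two-stage symmetrize-then-project construction is exactly this kind of argument and appears to match that route.

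One small point worth tightening: to conclude statement (2) at the level of the \emph{support} (not just $\tilde\gamma$-a.e.), you should observe that the set $\{y : c(y) = m(\rho(y))\}$ of spherewise minimizers is closed (continuity of $c$ and of $\vec r \mapsto m(\vec r)$ over the compact range of $\rho$), so that $\tilde\gamma$ being concentrated there implies $\operatorname{spt}(\tilde\gamma)$ lies inside it. Your remark about ``degeneracies where $K(\vec r)$ is not a single diagonal orbit'' is not actually a difficulty: $\nu_{\vec r}$ need only be \emph{some} radially symmetric probability supported in $K(\vec r)$, not all of it, so a single orbit always suffices.
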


The reader will probably not find this surprising, and the first part  is in fact an immediate corollary of the uniqueness in Theorem \ref{mmmapping}, when it applies (for example, for the Gangbo-Swiech cost). What is perhaps more interesting is that for other costs, for which that theorem fails, we can use the construction to exhibit examples where the solution concentrates on a higher dimensional set and fails to be unique.

We will call a cost \textit{non-attractive} if for any radii $(r_1,...r_m)$ the minimizers $(x_1,x_2,...x_m) \in argmin_{|y_i|=r_i; i=1,2,...m}c(y_1,y_2,...,y_m)$ are not all co-linear; that is, $x_i \neq \pm \frac{r_i}{r_1}x_1$ for some $i$. Note that this condition is not satisfied for the Gangbo-Swiech cost, but is satisfied for the Coulomb and determinant costs.  Under this condition, the solution in Theorem \ref{radial} above is  not of Monge type for $n \geq 3$.

\newtheorem{highdnonattractive}[radial]{Corollary}
\begin{highdnonattractive}\label{highdnonattractive}
Suppose $c$ is radially symmetric and non-attractive and the marginals $\mu_i$ are radially symmetric and absolutely continuous with respect to Lebesgue measure.  Then there exists solutions $\gamma$ whose support is at least $2n-2$-dimensional.
\end{highdnonattractive}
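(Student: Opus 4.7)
My strategy would be to exploit the $SO(n)$-invariance of the measure $\gamma$ produced by Theorem~\ref{radial}, combined with non-attractiveness, to exhibit a $(2n-3)$-dimensional orbit through any point of $spt(\gamma)$, and then to pick up one further dimension from variation of the radius $|x_1|$ through the absolutely continuous marginal $\mu_1$.

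Step one is to fix the radially symmetric optimizer $\gamma$ furnished by Theorem~\ref{radial}. Every $(x_1,\ldots,x_m) \in spt(\gamma)$ is then a minimizer in the sense of \eqref{spheremin}, and by non-attractiveness the vectors $x_1,\ldots,x_m$ span a subspace $V$ of dimension at least $2$. The stabilizer of such a configuration in $SO(n)$ is contained in the copy of $SO(n-2) \le SO(n)$ acting trivially on $V$, so its $SO(n)$-orbit is a smooth embedded submanifold of $(\mathbb{R}^n)^m$ of dimension at least
\[
\dim SO(n) - \dim SO(n-2) \;=\; \tfrac{n(n-1)}{2} - \tfrac{(n-2)(n-3)}{2} \;=\; 2n-3,
\]
and by the radial symmetry of $\gamma$ this entire orbit lies in $spt(\gamma)$.

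Step two is to produce a transverse direction using the radial profile of $\mu_1$. The map $f\colon spt(\gamma) \to [0,\infty)$, $f(x_1,\ldots,x_m) = |x_1|$, is $1$-Lipschitz, and its pushforward $f_{\#}\gamma = (|\cdot|)_{\#}\mu_1$ is absolutely continuous on $(0,\infty)$ because $\mu_1$ is radially symmetric and absolutely continuous on $\mathbb{R}^n$. Consequently $f(spt(\gamma))$ contains the support of $(|\cdot|)_{\#}\mu_1$, which has positive $1$-dimensional Lebesgue measure. For each $r$ in this image, the fibre $f^{-1}(r) \cap spt(\gamma)$ is $SO(n)$-invariant and, by step one, contains a smooth orbit of dimension at least $2n-3$, so $\mathcal{H}^{2n-3}(f^{-1}(r) \cap spt(\gamma)) > 0$.

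Step three is to combine these via the Eilenberg slicing inequality applied to $f$:
\[
\int^{*}_{[0,\infty)} \mathcal{H}^{2n-3}\bigl(f^{-1}(r) \cap spt(\gamma)\bigr)\, dr \;\le\; C\,\mathcal{H}^{2n-2}\bigl(spt(\gamma)\bigr).
\]
The left-hand side is strictly positive by step two, so $\mathcal{H}^{2n-2}(spt(\gamma)) > 0$ and therefore $\dim spt(\gamma) \ge 2n-2$. The main subtlety I anticipate lies in this final step: the orbit-plus-radius intuition is clear, but one must invoke the correct coarea-type inequality to rigorously pass from the dimensions of base and fibre to that of the total space. An alternative that avoids Eilenberg would be to extract from the proof of Theorem~\ref{radial} an explicit parametrization of a $(2n-2)$-dimensional patch of $spt(\gamma)$ as the image of a smooth section of $SO(n)/SO(n-2)$ crossed with the radial variable, but this requires more structural information about $\gamma$ than the statement of Theorem~\ref{radial} alone provides.
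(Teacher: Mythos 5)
Your proof is correct and reaches the paper's bound, but via a genuinely different decomposition of the support than the one sketched in the text. The paper fixes a point $(x_1,\ldots,x_m)\in spt(\gamma)$ with $x_1$ and some $x_i$ non-colinear, rotates only the remaining coordinates by the stabilizer $SO(n-1)\subset SO(n)$ of $x_1$ to produce an $(n-2)$-dimensional orbit of tuples with $x_1$ held fixed, and then lets $x_1$ range over the $n$-dimensional support of the absolutely continuous marginal $\mu_1$, arriving at $n+(n-2)=2n-2$. You instead take the full $SO(n)$-orbit of the tuple, whose dimension is $\dim SO(n)-\dim SO(n-2)=2n-3$ by the non-attractiveness hypothesis, and add only the one-dimensional radial variable $|x_1|$, arriving at $(2n-3)+1=2n-2$. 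The two slicings describe the same family of optimal configurations (the $SO(n)$-orbit itself decomposes as an $(n-1)$-sphere for the direction of $x_1$ times an $(n-2)$-dimensional fibre, and the radius supplies the remaining degree of freedom in $x_1$), but your version has the advantage that the base of the fibration is one-dimensional, so the Eilenberg slicing inequality that rigorously converts ``base dimension plus fibre dimension'' into a Hausdorff-measure bound is easy to state and apply. The paper's proof is explicitly only a sketch, with details deferred to \cite{P13b}, and it leaves this last additivity step implicit; your care in naming the co-area/Eilenberg inequality as the precise tool needed is a genuine refinement of the argument, even though the underlying geometric idea is the same.
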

\begin{proof}
The proof is given in \cite{P13b}; we sketch it here to bring out the main idea.
By Theorem \ref{radial} and the non-attractive condition, we can find $(x_1,...,x_m) \in spt(\gamma)$ such that $x_1$ and $x_i$ are not co-linear for some $i$.  Now note that there is an entire family of rotations $A$ that fix $x_1$ but not $x_i$ and the points $(Ax_1,Ax_2...Ax_m) =(x_1,Ax_2...Ax_m) $ are in the support of $\gamma$ for each such $A$.  Indeed, this family has dimension $n-2$, and so the support of $\gamma$ has dimension at least $n+n-2=2n-2$ (as we can choose freely the $n$-dimensional variable $x_1$ and the $n-2$ dimensional variable $A$).
\end{proof}
Note that his corollary, combined with Theorems \ref{mmmapping} and \ref{spacelike} implicitly yields that non-attractive, radially symmetric costs can satisfy neither \textbf{(MMA1)} nor \textbf{(MMA2)} when $n \geq 3$.
Finally, we remark that one can in fact often show that these higher dimensional solutions are non unique as well; see, for example \cite{CN} and \cite{P13b}.
\subsubsection{Infimal convolution costs}\label{infcon}
Here we focus on costs of the form 
\begin{equation}\label{infconcost}
c(x_1,x_2,...,x_m) =\min_{y \in Y}\sum_{i=1}^mc_i(x_i,y),
\end{equation}
where $Y$ is an additional smooth $n$-dimensional manifold.  As we will see in the next section, these types of costs arise naturally in applications in economics.  Costs of this form generally tend to be quite well behaved; the following result from \cite{P} yields quite generic conditions under which $c$ satisfies \textbf{(MMA2)}.

\newtheorem{infcostnondeg}[radial]{Proposition}
\begin{infcostnondeg}
Assume:
\begin{enumerate}
\item For all $i$,  $c_i$ is $C^{2}$ and non-degenerate; that is, $D^{2}_{x_iy}c_i$  is everywhere non-singular.
\item For each $(x_1,x_2,...,x_m)$ the minimum is attained by a unique $\tilde y(x_1,x_2,...,x_m) \in Y$ and
\item $\sum_{i=1}^m D^2_{yy}c_i(x_i,\tilde y(x_1,x_2,...,x_m))$ is non-singular.  
\end{enumerate}
Then the signature of $\bar{g}$ is $((m-1)n,n,0)$, and so $c$ satisfies \textbf{(MMA2)}.
\end{infcostnondeg}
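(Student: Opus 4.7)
The plan is to extract an explicit formula for the off-diagonal blocks of $\bar g$ via the envelope and implicit function theorems, and then read off the signature after a clever linear change of variables. Set $A_i := D^2_{x_iy}c_i(x_i,\tilde y)$ and $H := \sum_{k=1}^m D^2_{yy}c_k(x_k,\tilde y)$. Assumptions (1) and (3) render each $A_i$ and $H$ non-singular, and because $\tilde y$ is an honest minimizer of $y\mapsto\sum_k c_k(x_k,y)$ (not merely a critical point), $H$ is positive semidefinite, hence in fact positive definite.

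By the envelope theorem, $D_{x_i}c = D_{x_i}c_i(x_i,\tilde y)$. Differentiating the first-order condition $\sum_k D_y c_k(x_k,\tilde y)=0$ in $x_j$ yields $D_{x_j}\tilde y = -H^{-1}A_j^T$, so a second differentiation of $D_{x_i}c$ in $x_j$ gives, for $i\neq j$,
\[
D^2_{x_ix_j}c \;=\; -\,A_i\,H^{-1}A_j^T.
\]
The key observation is then that the substitution $w_i := A_i^T v_i$ is a linear bijection of $\R^{mn}$ (since each $A_i$ is invertible) that transforms the quadratic form associated to $\bar g$ into
\[
v^T\bar g\, v \;=\; \sum_{i=1}^m w_i^T H^{-1}w_i \;-\; \Bigl(\sum_{i=1}^m w_i\Bigr)^{\!T} H^{-1}\Bigl(\sum_{i=1}^m w_i\Bigr),
\]
after using the identity $\sum_{i\ne j}w_i^TH^{-1}w_j = (\sum_i w_i)^TH^{-1}(\sum_i w_i) - \sum_i w_i^TH^{-1}w_i$.

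To finish, I would decompose $\R^{mn}$ in the $w$-coordinates as the direct sum of the $n$-dimensional diagonal subspace $\Delta := \{(w,\ldots,w):w\in\R^n\}$ and the $(m-1)n$-dimensional subspace $\Delta^\perp := \{(w_1,\ldots,w_m):\sum_i w_i=0\}$, check that they are orthogonal for this bilinear form, and evaluate on each: on $\Delta$ the form reduces to $-m(m-1)\,w^T H^{-1}w$, negative definite and contributing $n$ negative eigenvalues, while on $\Delta^\perp$ it becomes $\sum_i w_i^T H^{-1}w_i$, positive definite and contributing $(m-1)n$ positive eigenvalues. Sylvester's law of inertia then gives signature $((m-1)n,n,0)$ for $\bar g$, and \textbf{(MMA2)} follows.

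The computation of $D^2_{x_ix_j}c$ is the standard envelope/implicit-function calculation, and once $\bar g$ is exposed as an outer-product-type matrix $-\mathbf{A}H^{-1}\mathbf{A}^T$ minus its block diagonal (with $\mathbf{A}$ the tall matrix whose blocks are the $A_i$), the signature follows from the one-line splitting into $\Delta\oplus\Delta^\perp$. The only genuinely delicate point is that the hypotheses only give $H$ non-singular, whereas the final quadratic-form argument needs $H$ positive definite; this is forced by minimality of $\tilde y$, so I do not anticipate any substantial obstruction.
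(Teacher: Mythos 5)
Your proof is correct; the survey defers the proof of this proposition to the cited reference, but your argument is the natural one and almost certainly matches it. The envelope and implicit-function-theorem calculation $D^2_{x_ix_j}c=-A_iH^{-1}A_j^T$ is exactly right, and the substitution $w_i=A_i^Tv_i$ reduces $\bar g$ to the uniformly-concave function-of-the-sum form with $D^2h=-H^{-1}<0$, so your $\Delta\oplus\Delta^\perp$ splitting (orthogonal for the $\bar g$-pairing) delivers the signature $((m-1)n,n,0)$ directly and also makes transparent the consistency with subsection 2.3.2.
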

We now turn to the twist on splitting sets condition.  Let us note that Monge solution results for costs of this form were proved (under successively weaker conditions on the $c_i$) in \cite{P1} \cite{P13a} and \cite{KP2}; the following is a special case of an example in \cite{KP2}, where costs with a more general infimal convolution form were considered.

\newtheorem{infcosttwist}[radial]{Proposition}
\begin{infcosttwist}\label{infcosttwist}
Assume $c_1$ is $(x_1,y)$-twisted  (ie, $y \mapsto D_{x_1}c_1(x_1,y)$ is injective), and $c_i$ is $(y,x_i)$-twisted  (ie, $x_i \mapsto D_{y}c_i(x_i,y)$ is injective) for $i=2,...,m$.  Then $c$ given by \eqref{infconcost} satisfies \textbf{(MMA1)}.
\end{infcosttwist}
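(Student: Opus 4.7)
The plan is to verify \textbf{(MMA1)} for $c$ directly. The semi-concavity part is automatic: $c$ is a pointwise infimum over $y \in Y$ of the $C^2$ functions $(x_1, \ldots, x_m) \mapsto \sum_{i=1}^m c_i(x_i, y)$, and subtracting a suitable quadratic turns $c$ into a pointwise infimum of concave functions. So let $S \subseteq M_2 \times \cdots \times M_m$ be a splitting set at $x_1$ with potentials $u_2, \ldots, u_m$, and take two points $(x_2, \ldots, x_m), (x_2', \ldots, x_m') \in S \cap \mathrm{Dom}(D_{x_1} c)$ sharing a common value of $D_{x_1} c(x_1, \cdot)$. I will show these coincide coordinate-wise in three steps that deploy the two twist hypotheses in succession.

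Step 1 (common hub). At any point where $c$ is $x_1$-differentiable, the minimizer $y^*$ of $y \mapsto \sum_i c_i(x_i, y)$ is essentially unique, and the envelope theorem gives $D_{x_1} c(x_1, x_2, \ldots, x_m) = D_{x_1} c_1(x_1, y^*)$. Applying this at both the original and primed points yields $D_{x_1} c_1(x_1, y^*) = D_{x_1} c_1(x_1, y^{*'})$; the $(x_1, y)$-twist of $c_1$ then forces $y^* = y^{*'}$, and I denote this common hub by $y^*$.

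Step 2 (mixed splitting points). Chaining the splitting inequality with the infimal-convolution upper bound at $y = y^*$ produces $\sum_{i \geq 2}[u_i(z_i) - c_i(z_i, y^*)] \leq c_1(x_1, y^*)$ for every $(z_2, \ldots, z_m)$. The supremum over each $z_i$ decouples the right-hand side, and comparison with the value at the original point---which saturates the bound by the splitting equality combined with the defining identity $c(x_1, x_2, \ldots, x_m) = \sum_{i=1}^m c_i(x_i, y^*)$---forces term-wise equality $u_i(x_i) - c_i(x_i, y^*) = u_i(x_i') - c_i(x_i', y^*)$ for every $i \geq 2$. From this it follows that for each $j \geq 2$ the mixed configuration $(x_2, \ldots, x_{j-1}, x_j', x_{j+1}, \ldots, x_m)$ also lies in $S$, and that $y^*$ is an optimal hub there: $\sum_{i \geq 2} u_i$ evaluated at the mixed point coincides with $c_1(x_1, y^*) + \sum_{i \neq j} c_i(x_i, y^*) + c_j(x_j', y^*)$, which is simultaneously an upper bound for $c$ at the mixed point (via the infimal convolution) and a lower bound (via the splitting inequality).

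Step 3 (second twist via FOC). The first-order condition in $y$ at both the original and the $j$-mixed configurations reads $\sum_{i=1}^m D_y c_i(\cdot, y^*) = 0$; subtracting these two identities cancels every term except the $j$-th, leaving $D_y c_j(x_j, y^*) = D_y c_j(x_j', y^*)$. The $(y, x_j)$-twist of $c_j$ then yields $x_j = x_j'$, and iterating over $j \geq 2$ completes the injectivity argument. The most delicate step I anticipate is Step 2, where the decoupled supremum must be shown to force term-wise saturation and the mixed points must genuinely inherit $y^*$ as a global minimizer of their associated infimal convolutions; both are standard manipulations with $c$-conjugate-type duality, but they are the conceptual core of the reduction.
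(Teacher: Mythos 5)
Your proof is correct and follows the same three-phase skeleton as the paper's: (i) use semi-concavity plus the $(x_1,y)$-twist of $c_1$ to force both splitting-set points to share a common hub $y^*$; (ii) show that the coordinate-swapped ``mixed'' points also admit $y^*$ as an optimal hub for their infimal convolutions; (iii) subtract the first-order conditions in $y$ at the original and mixed points and invoke the $(y,x_j)$-twist of $c_j$ to conclude $x_j = x_j'$. Steps (i) and (iii) are essentially identical to the paper's.

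The genuine difference is in step (ii). The paper cites $c$-monotonicity of splitting sets as a known lemma, writes the resulting four-point inequality for the swap of $x_2$ and $\bar x_2$, and chains it with the minimality of $\tilde y$ to force equality along the whole string, concluding that $\tilde y$ minimizes the infimal convolution at the mixed point. Your version instead works directly from the definition of a splitting set: chaining the splitting inequality with the upper bound $c \le \sum_i c_i(\cdot, y^*)$ gives a separable bound $\sum_{i\ge 2}\bigl[u_i(z_i) - c_i(z_i,y^*)\bigr] \le c_1(x_1,y^*)$ whose supremum decouples coordinate-wise, and saturation at both endpoints forces term-wise equality, from which membership of the mixed point in $S$ and optimality of $y^*$ there both follow. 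This is more self-contained --- it avoids invoking $c$-monotonicity as a black box and in fact re-derives the needed consequence of it --- and it delivers a cleaner symmetry (you get all the mixed points at once, not just the $j=2$ swap), at the cost of a bit more bookkeeping. After that the FOC cancellation is the same in both proofs.
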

\begin{proof}
A more general result is proved in \cite{KP2}.  Here, we feel it is instructive to outline the proof in this special case.

Fix $x_1$ and a splitting set $S$ at $x_1$.  We need to show that if $D_{x_1}c(x_1,...,x_m) = D_{x_1}c(x_1,\bar x_2,...\bar x_m)$, for $(x_2,...,x_m), (\bar x_2,...,\bar x_m) \in S$ then $(x_2,...,x_m)= (\bar x_2,...,\bar x_m) $.  We will show that $x_2=\bar x_2$; the argument that $x_j=\bar x_j$ for $j \neq 2$ is identical.

Choose $\tilde y \in argmin_y \sum_{i=1}^m c_i(x_i,y)$  attaining the minimum in  \eqref{infconcost}.  The semi-concave function, $ y \mapsto \sum_{i=1}^m c_i(x_i,y)$ is differentiable at its minimum $\tilde y$, and 
\begin{equation}\label{firstordcond}
\sum_{i=1}^m D_yc_i(x_i,\tilde y)=0.
\end{equation}
By semi-concavity, the existence of $D_{x_1}c(x_1,x_2,...,x_m)$ implies the existence of $D_{x_1}c_1(x_1,\tilde y)$, (as $c_1(x_1,\tilde y)$ is a supporting function for $c(x_1,x_2,...,x_m)$), and we have the equality 

\begin{equation*}
D_{x_1}c(x_1,x_2,...,x_m)=D_{x_1}c(x_1,\tilde y).
\end{equation*}
Similarly, for $\bar{\tilde y} \in argmin_y[ \sum_{i=2}^m c_i(\bar x_i,y) +c_1(x_1,y)]$, we have

\begin{equation*}
D_{x_1}c(x_1,\bar x_2,...,\bar x_m)=D_{x_1}c(x_1,\bar{\tilde y}).
\end{equation*}

But then, by our assumption  $D_{x_1}c(x_1,...,x_m) = D_{x_1}c(x_1,\bar x_2,...\bar x_m)$, we have $D_{x_1}c(x_1,\tilde y)=D_{x_1}c(x_1,\bar{\tilde y}) $.  $(x_1,y)$ -twistedness then implies 

\begin{equation}\label{samebc}
\tilde y= \bar{\tilde y}.
\end{equation}
Now, it is well known that splitting sets are $c$-monotone \cite{KP2}, which implies:

\begin{eqnarray*}
c(x_1,x_2,x_3...,x_m) + c(x_1,\bar x_2,\bar x_3,...\bar x_m) &\leq& c(x_1,\bar x_2,x_3,...,x_m) \\
&&+ c(x_1, x_2,\bar x_3,...,\bar x_m).
\end{eqnarray*}
Using \eqref{samebc} and minimizing property of $\tilde y= \bar{\tilde y}$, this becomes
\begin{eqnarray*}
\sum_{i=1}^mc_i(x_i,\tilde y)+c_1(x_1,\tilde y)+\sum_{i=2}^m c_i(\bar x_i, \tilde y) &\leq &c (x_1,\bar x_2,x_3,...,x_m) \\
&& +c(x_1, x_2,\bar x_3,...,\bar x_m)\\
& \leq & \sum_{i \neq 2}^mc_i(x_i,\tilde y) +c_2(\bar x_2, \tilde y)\\
&& +\sum_{i = 3}^mc_i(\bar x_i, \tilde y) +c_1(x_1, \tilde y) +c_2 (x_2, \tilde y) 
\end{eqnarray*}
 But the first and last terms in the preceding string of inequalities are identical, and so we must have equality throughout.  In particular, we must have $c (x_1,\bar x_2,x_3,...,x_m) = \sum_{i \neq 2}^mc_i(x_i,\tilde y) +c_2(\bar x_2,\tilde y)$, so that $\tilde y \in argmin_y \sum_{i \neq 2}^m [c_i(x_i,y) + c_2(\bar x_2, y)]$.  Thus,

\begin{equation*}
\sum_{i \neq 2}^m D_yc_i(x_i,\tilde y) + D_yc_2(\bar x_2, \tilde y)=0,
\end{equation*}
or 
\begin{equation*}
 D_yc_2(\bar x_2, \tilde y)=-\sum_{i \neq 2}^m D_yc_i(x_i,\tilde y) = D_yc_2( x_2, \tilde y),
\end{equation*}
where the last equality follows from \eqref{firstordcond}.  Therefore, by the $(y,x_i)$-twist assumption, we have $\bar x_2 =x_2$ as desired.
\end{proof}

Let us note that there is significant overlap between the class of costs of form \eqref{infconcost} and functions satisfying the differential conditions from Proposition \ref{diffcond}; both classes include, for example, the concave functions of the sum  in subsection 2.3.2.  However, neither class contains the other; in \cite{P13a}, an example was exhibited that satisfies the differential conditions but is  not of  form \eqref{infconcost}, as well as one which is of form \eqref{infconcost}, but does not satisfy the differential conditions.  This was, in fact, part of the motivation behind the development of the twist on splitting sets condition; it is desirable to have a general condition encompassing all known examples.

However, we note that \textit{conditions}  on the $c_i$ (significantly stronger than those assumed in Proposition \ref{infcosttwist}) are known  under which  costs of form \eqref{infconcost} \textit{do} satisfy the differential conditions \cite{P1}.

\subsubsection{Vector fields costs}
We consider now the cost 
\begin{equation*}
c(x_1,x_2,x_3) = -(Ax_1 \cdot x_2 +Bx_1 \cdot x_3 + Ax_3 \cdot x_3 +Bx_3 \cdot x_2 +Ax_2 \cdot x_3 +Bx_2 \cdot x_1)
\end{equation*}
for $n \times n$ matrices $A,B$.  This cost shows up in a special case of a representation result in \cite{GhM}, for the linear vector fields $Ax_1, Bx_1$; the condition \textbf{(MMA1)} is relevant there as it implies uniqueness of the measure preserving $3$ - involution used to represent the vector fields.  The differential conditions for \textbf{(MMA1)} boil down to
\begin{equation*}
(A+B^T)[B+A^T]^{-1}(A+B^T)>0.
\end{equation*}
This holds if, for example, $M =B+A^T$ is symmetric positive definite, or if $M$ is unitary, $M^{-1}=M^T$, with $M^3 >0$  (for example, a rotation through an angle of less than $\frac{\pi}{6}$, or a rotation through an angle between $\pi/2$ and $5\pi/6$).
\subsection{Extensions and variants of the multi-marginal problem}
Recently, several variants of \eqref{mmk} have been introduced.  These include the optimal partial transport problem, where only a prescribed fraction of the mass is to be coupled, and the martingale optimal transport problem, where the minimization is restricted to martingale measures.  In the former variant, uniqueness of solutions for infimal convolution type costs has been established with Kitagawa \cite{KitPass}, under a natural and necessary condition on the marginals, extending work in the two marginal case of Caffarelli-McCann \cite{CM10} and Figalli \cite{Fig10a}.  In the later variant, for one dimensional marginals, explicit solutions for the two marginal case have been found \cite{bj} \cite{hpt} under natural conditions on the cost.  Here, the martingale constraint precludes the possibility of Monge solutions, but uniqueness persists.  The extension to several marginals has received a lot of attention due to applications in model independent bounds on derivative prices in mathematical finance.

Another extension is obtained when the number $m$ of marginals is allowed to be infinite.  In this case, the dichotomy described in the earlier examples becomes even more pronounced; for Gangbo-Swiech type costs, the solution again concentrates on graphs over the first marginal \cite{P5,P6}, but for a class of costs including the Coulomb cost (see subsection 3.2) the optimizer turns out to be product measure \cite{CFP}! 

In another variant, the minimization is restricted to measures with certain symmetry properties.  This has striking applications in the representation theory of vector fields \cite{GG}\cite{GhM}\cite{GhMa}, as well as potential applications in roommate type matching problems in economics \cite{cgs}.
\section{Applications}
In this section, we describe in detail two applications of multi-marginal optimal transport; hedonic matching for teams in economics, and density functional theory in physics.   In both cases, we first describe the application in some detail, and explain how multi-marginal optimal transport arises.  After this, we discuss what is known about the structure of the solution(s).  In a third subsection, we will briefly describe a variety of other applications.
\subsection{Economics: Matching for teams}
A model to study hedonic pricing was developed by Ekeland \cite{E}, and extended to multi-agent hedonic pricing, or matching for teams, by Carlier and Ekeland \cite{CE}.  They originally formulated the problem as a convex optimization problem in the space of measures; an equivalent formulation of the same problem as a minimization of type \eqref{mmk} can be found in both \cite{CE} and \cite{cmn}. 

As description is as follows.  Consider a population of buyers, looking to buy some good.   Production of that good requires input from several types of agents.  As a concrete example, suppose the population consists of buyers looking to purchase custom made houses.   To have a house built, a buyer must hire several subcontractors, say a carpenter, an electrician and  a plumber.  We will assume that the population of buyers is parameterized by a set $M_1$ and their relative frequency by a probability measure $\mu_1$ on $M_1$.   For $i=2,....m$ (where $m-1$ is the number of types of tradespeople), we will assume the population of the $i$th type of tradespeople is parameterized by a probability measure $\mu_i$ on a set $M_i$.  

We will assume that $Y$ represents a set of goods that could feasibly be constructed.  For example, $Y$ could be a subset of $\mathbb{R}^n$, and the different components $y^i$ of $(y^1,y^2,...,y^m) \in Y$ could measure different characteristics of the house, such as location, house size, lot size, etc.    Now, each buyer type $x_1$  has a preference for a  house of type $y \in Y$, $p_1(x_1,y)$ (which can be interpreted as the amount he feels house $y$ is worth)) and each subcontractor has a cost, $c_i(x_i,y)$, representing what it would cost him, in, say, labour and supplies, to do his share of the work on a house of type $y$.  

As is demonstrated by Carlier and Ekeland, setting $c_1(x_1,y)=-p_1(x_1,y)$, an equilibrium in this model turns out to be equivalent to solving the following variational problem:
\begin{equation}\label{gbc}
\inf_{\nu \in P(Y)} \sum_{i=1}^m T_{c_i}(\mu_i,\nu),
\end{equation}
 where $P(Y)$ represents the set of probability measures on $Y$ and 
\begin{equation}\label{taskassign}
T_{c_i}(\mu_i,\nu) :=\inf_{\lambda_i \in \Pi(\mu_i,\nu)} \int_{M_i \times Y} c_i(x_i,y)d\lambda_i
\end{equation}
is the optimal cost in the two marginal problem between $\mu_i$ and $\nu$.  Carlier and Ekeland also showed that this is equivalent to the multi-marginal problem \eqref{mmk} with cost function given by
\begin{equation}\label{matchingcost}
c(x_1,...,x_m) = \inf_{y \in Y} \sum_{i=1}^m c_i(x_i,y).
\end{equation}
Intuitively, for a given  distribution $\nu$ of houses (or \textit{contracts}), \eqref{taskassign} finds the matching of the $i$th category of workers to these contracts minimizing the total cost.  The problem \eqref{gbc} then looks for a $\nu$ that minimizes the sum of these costs over all categories.  Workers are then assigned to teams based on the contract $y \in Y$ that the optimal coupling $\lambda_i$ in \eqref{taskassign} matches them to. 

On the other hand, for a potential team $(x_1,x_2,...,x_m)$,  the minimization in \eqref{matchingcost} identifies which feasible contract $y$ would minimize the total cost for that particular team.  Problem \eqref{mmk} then asks how to form teams to minimize the overall total cost, assuming that each team will sign the overall best feasible contract for its purposes.
Precisely, the equivalence between these problems is captured by the following result (Proposition 3 in \cite{CE}).
\newtheorem{equivalence}{Proposition}[subsection]
\begin{equivalence}\label{equivalence}
Assume the infimum in \eqref{matchingcost} is uniquely attained for each $(x_1,x_2,...,x_m)$, at some point $\bar y(x_1,x_2,...,x_m)$.  Then
\begin{enumerate}
\item The infimums \eqref{mmk} (with cost \eqref{matchingcost}) and \eqref{gbc} have the same value.
\item If $\gamma$ is a solution to \eqref{mmk}, then $\bar y_{\#} \gamma$ is a solution to \eqref{gbc}.
\item If $\nu$ solves \eqref{gbc}, then there is a solution $\gamma$ to \eqref{mmk} such that $\nu =\bar y_{\#} \gamma$.
\end{enumerate}
\end{equivalence}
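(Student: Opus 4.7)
The plan is to prove equality of the two optimal values by establishing matching inequalities, and then to read off parts 2 and 3 from the equality case. The central construction is a \emph{gluing} operation that builds a multi-marginal plan out of a measure on $Y$ together with two-marginal plans, and its ``inverse'' built via pushforward through $\bar y$.

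First, for the inequality $\inf \eqref{mmk} \geq \inf \eqref{gbc}$, I would take any $\gamma \in \Pi(\mu_1,\ldots,\mu_m)$, set $\nu := \bar y_\#\gamma$, and define $\lambda_i := (\pi_i,\bar y)_\#\gamma \in \Pi(\mu_i,\nu)$, where $\pi_i$ denotes projection onto $M_i$. By the definition of $\bar y$ as the (unique) minimizer in \eqref{matchingcost}, the identity $c(x_1,\ldots,x_m)=\sum_{i=1}^m c_i(x_i,\bar y(x_1,\ldots,x_m))$ holds pointwise, so a change-of-variables gives
\begin{equation*}
\int c\,d\gamma \;=\; \sum_{i=1}^m\int c_i(x_i,y)\,d\lambda_i(x_i,y) \;\geq\; \sum_{i=1}^m T_{c_i}(\mu_i,\nu) \;\geq\; \inf \eqref{gbc}.
\end{equation*}

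For the reverse direction, I would take any $\nu\in P(Y)$ together with optimal $\lambda_i \in \Pi(\mu_i,\nu)$ achieving $T_{c_i}(\mu_i,\nu)$, disintegrate each $\lambda_i = \int_Y (\lambda_i)_y\,d\nu(y)$ against its $Y$-marginal $\nu$, and define the glued plan
\begin{equation*}
\gamma \;:=\; \int_Y (\lambda_1)_y \otimes (\lambda_2)_y \otimes\cdots\otimes (\lambda_m)_y\,d\nu(y) \;\in\; \Pi(\mu_1,\ldots,\mu_m).
\end{equation*}
Introducing the auxiliary measure $\tilde\gamma := \int_Y (\lambda_1)_y\otimes\cdots\otimes(\lambda_m)_y\otimes\delta_y\,d\nu(y)$ on $M_1\times\cdots\times M_m\times Y$, whose $(x_i,y)$-marginal is $\lambda_i$ and whose $(x_1,\ldots,x_m)$-marginal is $\gamma$, and using the pointwise bound $c(x_1,\ldots,x_m) \leq \sum_i c_i(x_i,y)$, I obtain
\begin{equation*}
\int c\,d\gamma \;\leq\; \int \sum_{i=1}^m c_i(x_i,y)\,d\tilde\gamma \;=\; \sum_{i=1}^m T_{c_i}(\mu_i,\nu).
\end{equation*}
This gives $\inf \eqref{mmk} \leq \inf \eqref{gbc}$, completing part 1.

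Part 2 is then immediate: if $\gamma$ solves \eqref{mmk}, taking $\nu := \bar y_\#\gamma$ collapses the first chain to equalities, forcing each $\lambda_i$ to be optimal and $\nu$ to achieve $\inf \eqref{gbc}$. For part 3, given an optimal $\nu$, the glued $\gamma$ attains $\inf \eqref{mmk}$; the remaining task is to show $\bar y_\#\gamma = \nu$. This is exactly where the \emph{uniqueness} of the minimizer $\bar y$ is indispensable: equality throughout the second chain forces $c(x_1,\ldots,x_m) = \sum_i c_i(x_i,y)$ on $\mathrm{spt}(\tilde\gamma)$, whence $y = \bar y(x_1,\ldots,x_m)$ $\tilde\gamma$-a.e., and so pushing $\tilde\gamma$ forward by $\bar y\circ(\pi_1,\ldots,\pi_m)$ agrees with the projection onto $Y$, which is $\nu$. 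The main technical obstacle is the gluing step: one needs measurability of $\bar y$ (which follows from uniqueness and continuity of the $c_i$), a disintegration theorem valid on $Y$, and Fubini-type care in verifying the marginal properties of $\tilde\gamma$; the remainder is bookkeeping.
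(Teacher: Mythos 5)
Your proof is correct and follows the standard two-sided argument that one would expect here; the paper itself does not supply a proof but cites Proposition 3 of Carlier--Ekeland \cite{CE}, and the strategy there is precisely this: push $\gamma$ forward through $\bar y$ to produce $(\nu,\lambda_1,\ldots,\lambda_m)$ for one inequality, and glue disintegrated optimal couplings $(\lambda_i)_y$ over a common $\nu$ for the other. Your observation that uniqueness of $\bar y$ is only truly needed in part 3 (to pin down $\bar y_\#\gamma=\nu$ from the equality case via $y=\bar y(x_1,\ldots,x_m)$ $\tilde\gamma$-a.e.) is a nice isolation of where the hypothesis enters, and is consistent with the paper's subsequent remark that the uniqueness assumption can be weakened when the $c_i$ are twisted.
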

\newtheorem{relaxassume}[equivalence]{Remark}
\begin{relaxassume}
When $Y$ is an open domain in a smooth manifold, the uniqueness assumption on $\bar y$ can be relaxed somewhat, under other structural conditions on the $c_i$; in this case, if each $c_i$ is twisted, it suffices to assume the existence of a minimizer.  In fact, in this setting, if $\gamma$ solves \eqref{mmk} and $\mu_1$ is absolutely continuous with respect to  local coordinates, uniqueness for $\gamma$ almost all $(x_1,...,x_m)$ actually follows as a \emph{consequence} \cite{KP2}.
\end{relaxassume}
\newtheorem{minesfactories}[equivalence]{Remark}
\begin{minesfactories}
Interestingly, problem \eqref{gbc} has another natural interpretation, in terms of the factory and mine description of the classical optimal transport problem.   Suppose that a company is building a good whose production requires several resources, say iron, aluminum and nickel.  Imagine that the company has not yet built its factories where the goods will be constructed, but for each $i$ there is a probability measure $\mu_i$ on a set $M_i$  representing a distribution of mines producing the $i$th resource.  
The cost of shipping the $i$th resources from a mine at point $x_i$ to a (potential) factory at point $y$ is given by $c_i(x_i,y)$.  Imagine that the set $Y$ represents a set of locations where factories could conceivably be built; if the company built a distribution of factories on $Y$ according to a measure $\nu$, its transport cost to send resource $i$ from mines to factories would be $T_{c_i}(\mu_i,\nu)$. The company's goal is then to decide where to build the factories (that is, choose $\nu$) , and decide which mine of each type should supply each factory, in such a way as to minimize the total transportation cost.   This corresponds precisely to problem \eqref{gbc}.
\end{minesfactories}

When each $M_i=Y=M$, where $M$ is a Riemannian manifold, and $c_i =t_id^2(x_i,y)$ is a positive multiple of the Riemannian distance squared, then $T_{c_i}(\nu,\mu_i) =t_iW^2(\nu,\mu_i)$ is proportional the squared Wasserstein distance, and solutions to \eqref{gbc} are \textit{barycenters} of the measures $\mu_i$ with weights $t_i$, with respect to the Wasserstein metric on $P(M)$.    This was investigated by Agueh-Carlier when $M= \mathbb{R}^n$, and they were able to use the equivalence with the multi-marginal problem (with Gangbo-Swiech cost in this case)  to establish a regularity result on the barycenter \cite{AC} (the present author later extended this technique to more general costs $c_i(x_i,y)$  on $\mathbb{R}^n$, establishing absolute continuity of the solution $\nu$ to \eqref{gbc}\cite{P13a}).   The  Riemannian case was studied with Kim \cite{KP}, and the uniqueness and Monge solution results obtained there can be viewed as an extension of the Gangbo-Swiech theorem \cite{GS} to curved geometries, analogous to McCann's extension \cite{m3} of Brenier's polar factorization theorem \cite{bren}.  Barycenters are interesting in their own right, and also have applications arising in texture mixing in image processing \cite{bdpr} as well as statistics \cite{BK} . 

The twist condition on $c_1$ and regularity of the first marginal is sufficient to ensure uniqueness of the solution to \eqref{gbc} \cite{CE}.  Turning to the multi-marginal problem, provided the minimum in \eqref{matchingcost} is attained for all $(x_1,...,x_m)$, then \eqref{matchingcost} is identical to \eqref{infconcost}, and so Theorem \ref{mmmapping} and Proposition \ref{infcosttwist} provide conditions ensuring uniqueness and Monge structure of the solution to \eqref{mmk}.



In this context, the Monge solution structure is equivalent to a version of the economic notion of \emph{purity}; it means that, in equilibrium, buyers of the same type $x_1$ almost surely match with workers $x_i =F_i(x_1)$ of the same type in category $i$, for each $i$.  When $m=2$, this notion of purity was introduced in this context by Chiappori, McCann and Nesheim \cite{cmn}.  A different notion of purity, essentially that workers of the same type $x_1$ always buy goods of the same type $y \in Y$ was discussed by Carlier and Ekeland, and requires only twistedness of the first cost $c_1$ \cite{CE}.
\subsection{Physics: Density Functional Theory}
A natural and fundamental problem in quantum physics is to determine or estimate the ground state energy of a system of interacting electrons.  Quantum mechanically, such a system in modeled by an $m$-particle wave function.  Neglecting the role of spin, this amounts to a complex valued, square integrable function $\psi(x_1,....,x_m)$ of the respective positions $x_1,...,x_m \in \mathbb{R}^n$ of the electrons, with $n=3$ being the most physically relevant case.  The wave function is required to be anti-symmetric: for any permutation $\sigma$ on $m$ letters, we have $\psi(x_1,x_2,...,x_m) = sgn(\sigma)\psi(x_{\sigma(1)},x_{\sigma(2)},...,x_{\sigma(m)})$.  

Physically, $|\psi(x_1,....,x_m)|^2$ represents the probability that the electrons will be found at positions $(x_1,...,x_m)$ respectively; note that the anti-symmetry of $\psi$ implies the symmetry of this probability; we can interpret this as expressing  indistinguishability of the electrons.

The total energy of the system is given by 

\begin{equation}
E[\psi] = T[\psi] + V_{ext}[\psi] + V_{ee}[\psi], 
\end{equation}
where $T[\psi] = \frac{\hbar}{2m_e}\int_{\mathbb{R}^{nm}}\sum_{i=1}^m|\nabla_{x_i} \psi|^2|\psi|^2dx_1...dx_m$ is the kinetic energy ($\hbar$ is Planck's constant and $m_e$ the mass of an electron), $V_{ext}[\psi] = m\int_{\mathbb{R}^n}V(x)d\mu(x)$ is the external energy arising from a potential function $V$ ($\mu$ is the single particle density of the wavefunction $\psi$,  or equivalently, the marginal on each copy of $\mathbb{R}^n$ of the symmetric measure on $\mathbb{R}^{nm}$ with density $|\psi(x_1,x_2,...,x_m)|^2$) and  $V_{ee}[\psi] = {m \choose 2}\int_{\mathbb{R}^{nm}} \sum_{i \neq j}^m \frac{1}{|x_i-x_j| }|\psi|^2dx_1...dx_m$ is the electronic interaction energy.  We are interested in minimizing $E$ over all wave functions $\psi$.  In this form, the problem is numerically unwieldy, as we must minimize over all wave functions $\psi$ on $\mathbb{R}^{nm}$, and $m$ can be quite large for many systems.  The problem can, however, be rewritten as an iterated minimization:

\begin{equation*}
\min_{\mu} V_{ext}[\mu] +F_{HK}[\mu].
\end{equation*}
where $F_{HK}[\mu] := \min_{ \psi \rightarrow \mu}T[\psi]+V_{ee}[\psi]$ is the Hohenberg-Kohn functional, and the notation $\psi \rightarrow \mu$ means that $\mu$ is the single particle density of the wavefunction $\psi$.    This represents a substantial complexity reduction, as we are minimizing over single particle densities $\mu$ rather than $m$-particle wave functions $\psi$, \textit{if} the functional $F_{HK}[\mu]$ is well understood (note that $F_{HK}[\mu]$ contains in its definition a minimization over wave functions $\psi$, and so the complexity is reduced only if we have an analytic way to determine, or at least approximate, $F_{HK}[\mu]$).  Density functional theory is the study of this functional and is a hugely popular area of research among physicists and chemists.  The main goal is to find good approximations of $F_{HK}[\mu]$; for a detailed introduction, see \cite{PY}.

Although it had in some sense been implicit in the physics literature for quite some time \cite{sggs} \cite{seidl}, the formulation of density functional theory as a  multi-marginal optimal transport problem was only recently introduced, independently by Cotar, Friesecke and Kluppelberg \cite{CFK} and Buttazzo, De Pascale and Gori-Giorgi \cite{bdpgg}.  In particular, the precise link  comes when we consider the semi-classical limit, $\hbar \rightarrow 0$, as is shown by the following theorem of Cotar, Friesecke and Kluppelberg \cite{CFK} \cite{cfk2}.

\newtheorem{semiclasslim}{Theorem}[subsection]

\begin{semiclasslim}
In the limit as $\hbar$ tends to $0$, $F_{HK}[\mu]$ is equal to the infimal value in \eqref{mmk}  with the Coulomb cost $c(x_1,x_2,...,x_m):=\sum_{i \neq j}^m \frac{1}{|x_i-x_j| }$ and each marginal $\mu_i =\mu$; that is:
$$
\lim_{\hbar \rightarrow 0} F_{HK}[\mu] ={m \choose 2}\inf_{\mu \in \Pi(\mu,\mu,...,\mu)}C(\gamma),
$$
where $C(\gamma):=\int_{\mathbb{R}^{nm}}\sum_{i \neq j}^m \frac{1}{|x_i-x_j| }d\gamma$.
\end{semiclasslim}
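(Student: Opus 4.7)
The natural strategy is to prove matching $\liminf$ and $\limsup$ inequalities, treating the kinetic term as a vanishing singular perturbation of the Coulomb interaction.

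For the $\liminf$ bound, take an (almost) minimizing sequence $\psi_\hbar$ for $F_{HK}[\mu]$. Set $d\gamma_\hbar := |\psi_\hbar(x_1,\ldots,x_m)|^2\,dx_1\cdots dx_m$; the antisymmetry of $\psi_\hbar$ makes $\gamma_\hbar$ a symmetric probability measure on $\mathbb{R}^{nm}$, and the density constraint places $\gamma_\hbar\in \Pi(\mu,\ldots,\mu)$. Since $T[\psi_\hbar] \geq 0$,
$$F_{HK}[\mu]+o(1) \;\geq\; V_{ee}[\psi_\hbar] \;=\; \binom{m}{2}\,C(\gamma_\hbar).$$
By weak-$*$ compactness of $\Pi(\mu,\ldots,\mu)$, extract a subsequential limit $\gamma_\hbar \rightharpoonup \gamma_\ast\in\Pi(\mu,\ldots,\mu)$. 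The Coulomb cost is nonnegative and lower semicontinuous on $\mathbb{R}^{nm}$ (continuous off the pairwise diagonals, $+\infty$ on them), so the standard weak-$*$ lower semicontinuity of integrals of lsc functions yields $\liminf_\hbar C(\gamma_\hbar)\geq C(\gamma_\ast)\geq \inf_{\gamma\in\Pi(\mu,\ldots,\mu)} C(\gamma)$.

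For the $\limsup$ bound, pick a minimizer $\gamma_\ast$ of \eqref{mmk} with $C(\gamma_\ast)<\infty$ (otherwise there is nothing to prove). Build trial wave functions $\psi_\hbar$ by (i) convolving $\gamma_\ast$ with a symmetric mollifier at spatial scale $\epsilon=\epsilon(\hbar)$, (ii) correcting the marginals back to $\mu$ via a small transport step, and (iii) taking $\psi_\hbar$ to be a suitably antisymmetrized square root of the resulting smooth symmetric density. Choose $\epsilon(\hbar)$ with $\epsilon\to 0$ but $\hbar/\epsilon^2 \to 0$, so that $T[\psi_\hbar]=O(\hbar/\epsilon^2)\to 0$. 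Because $C(\gamma_\ast)<\infty$ forces $\gamma_\ast$ to avoid neighbourhoods of every diagonal $\{x_i=x_j\}$, the Coulomb kernel is bounded on a neighbourhood of $\operatorname{spt}(\gamma_\ast)$, and $V_{ee}[\psi_\hbar]\to \binom{m}{2}C(\gamma_\ast)$ as $\epsilon\to 0$.

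The main obstacle is the $\limsup$ construction. As Corollary \ref{ndimsol} and the radially symmetric examples of Corollary \ref{highdnonattractive} indicate, optimizers for the Coulomb multi-marginal problem are generally supported on low-dimensional, and typically singular, sets. Producing an admissible wave function from such a $\gamma_\ast$ requires three delicately coupled choices: the mollification scale $\epsilon(\hbar)$ must be tuned so that the kinetic energy vanishes while the Coulomb cost is preserved in the limit; the marginal constraint, broken by convolution, has to be restored without damaging these estimates; and the fermionic antisymmetry of $\psi_\hbar$ is not automatic from a symmetric density (the $N$-representability issue), so a careful phase construction, or an approximation via bosonic trial states with vanishing correction, is needed. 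All three steps are carried out in \cite{CFK,cfk2}.
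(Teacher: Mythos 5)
The survey states this result and cites \cite{CFK,cfk2} without reproducing the argument, so the comparison is to those references. Your overall architecture matches theirs: a lower bound obtained by discarding the nonnegative kinetic term, and a harder upper bound built by constructing trial wave functions from a near-minimizer of the multi-marginal problem. Incidentally, your $\liminf$ step is more elaborate than necessary: once you observe $\gamma_\hbar \in \Pi(\mu,\ldots,\mu)$, the inequality $C(\gamma_\hbar) \geq \inf_{\gamma \in \Pi(\mu,\ldots,\mu)} C(\gamma)$ holds for each $\hbar$ separately, and no weak-$*$ compactness or lower semicontinuity of the Coulomb cost is required.

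There is, however, a genuine gap in the $\limsup$ construction. The sentence ``Because $C(\gamma_\ast)<\infty$ forces $\gamma_\ast$ to avoid neighbourhoods of every diagonal $\{x_i=x_j\}$, the Coulomb kernel is bounded on a neighbourhood of $\mathrm{spt}(\gamma_\ast)$'' is false. For $n\geq 3$ the kernel $1/|x_i-x_j|$ is locally integrable, so, for instance, the independent coupling $\mu\otimes\cdots\otimes\mu$ has finite Coulomb cost whenever $\mu$ has bounded, compactly supported density, yet its support contains every diagonal. Finite cost controls the \emph{mass} $\gamma_\ast$ places near the diagonals, not the support. The property you actually want, a uniform bound $|x_i-x_j|\geq\delta>0$ on $\mathrm{spt}(\gamma_\ast)$, is a nontrivial regularity statement about Coulomb \emph{optimizers}, not a consequence of $C(\gamma_\ast)<\infty$; and even granting it, after mollification the smoothed plan has support meeting the diagonals, so boundedness of the kernel near $\mathrm{spt}(\gamma_\ast)$ still does not control $C$ of the trial state. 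The argument in \cite{CFK,cfk2} instead hinges on a smoothing lemma producing $\gamma_\epsilon\in\Pi(\mu,\ldots,\mu)$ with bounded density and $C(\gamma_\epsilon)\to C(\gamma_\ast)$, exploiting that $1/|y|$ is superharmonic (so averaging against radial mollifiers can only decrease each pairwise term) together with a block or quantization construction that restores the marginals exactly rather than approximately. You correctly flag the marginal restoration and fermionic antisymmetrization as delicate, but the step you invoke to control the Coulomb energy of the trial state does not hold as stated and needs to be replaced by this sharper smoothing result.
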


\newtheorem{symm}[semiclasslim]{Remark}

\begin{symm}
The minimization above  may be restricted to  measures which are symmetric under permutations; that is, measures for which $\gamma =\sigma_{\#}\gamma$ for any permutation $\sigma$ of the arguments $(x_1,x_2,...,x_m)$.  This follow easily by noting that $C(\gamma) = C(\bar \gamma)$, where  $\bar \gamma =
\frac{1}{m!}\sum_{\sigma \in S_m}\sigma_{\#} \gamma$ is the symmetrization of $\gamma$ (the sum is over the permutation group $S_m$ on $m$ letters), and that if $\gamma \in \Pi(\mu,...,\mu)$ then so does $\bar \gamma$.

Physically, we can think of symmetric measures as representing semi-classical $m$-particle densities, with symmetry reflecting the indistinguishable of the electrons.  Mathematically, the observation above raises subtle uniqueness questions, as it is possible for the solution $\gamma$ to be non-unique, but for there to be a unique minimizer in the smaller class of symmetric minimizers (see Theorem \ref{dft1d} below).

The study of optimal transport problems with symmetry constraints was initiated (with a different cost function, and very different motivations) in the two marginal setting by Ghoussoub and Moameni \cite{GhM2}, and continued in the multi-marginal context  in a recent series of papers \cite{GhM}\cite{GG}\cite{GhMa}.
\end{symm}

The following theorem from \cite{CFK} and \cite{bdpgg} is similar in spirit to Theorem \ref{mapping}, as  the two marginal Coulomb cost satisfies the twist condition off the diagonal, although the proof must overcome some additional technical obstacles due to the singularity on the diagonal.

\newtheorem{dft2marg}[semiclasslim]{Theorem}

\begin{dft2marg}
Let $m=2$, assume that $\mu$ is absolutely continuous with a density in $L^1 \cap L^3$.  Then the optimal measure $\gamma$ induces a Monge solution and is unique.
\end{dft2marg}

Note that the uniqueness immediately implies symmetry under permutations of the optimizer $\gamma$.  When the marginal $\mu$ is radially symmetric, one can use Theorem \ref{radial} to deduce an explicit formula for the optimal map, arriving at $F(x) = -xf(|x|)$, where $f$ is the monotone decreasing rearrangement of the radial density function.  This was originally proven in \cite{CFK} and \cite{bdpgg}; the proof based on the Theorem \ref{radial} can be found in \cite{P13b}.
 
As we will see below, the graphical structure and uniqueness break down as soon as there are at least three marginals.  In one dimension, however, there is still a unique \textit{symmetric} minimizer, as the following result of Colombo, De Pascale and Di Marino  shows \cite{cdpdm}.

\newtheorem{dft1d}[semiclasslim]{Theorem}
\begin{dft1d}\label{dft1d}
Let $\mu$ be a  non  atomic  probability measure on $\mathbb{R}$. Choose points $-\infty =d_0 <d_1...<d_m = \infty$ such that $\mu[d_i,d_{i+1}] =\frac{1}{m}$.  Then define $T:\mathbb{R} \rightarrow \mathbb{R}$ by letting its restriction to the interval $[d_i,d_{i+1}]$ be the unique monotone increasing map pushing $\mu|_{[d_i,d_{i+1}]}$ forward to $\mu|_{[d_{i+1},d_{i+2}]}$ for $i=0,...m-1$, using the convention $m+1 =0$.

Then $(T,T^2,T^3,...T^{m-1})$ is a solution to \eqref{mmm} with Coulomb cost, where $T^k$ denotes composition of $T$ with itself $k$ times.  Furthermore, the symmetrization of $\gamma=(T,T^2,T^3,...T^{m-1})_{\#}\mu$ is the unique symmetric solution to \eqref{mmk}
\end{dft1d}
\newtheorem{localmono}[semiclasslim]{Remark}

\begin{localmono}
On any open set where $c$ is non-singular, (for instance, the set where $x_1 <x_2<...<x_m$) we have $\frac{\partial^2 c}{\partial x_i \partial x_j} <0$, and so $c$ is compatible when restricted to this set, and the local structure of  the solution in the preceding theorem is consistent with Theorem \ref{onedmonge}.  But clearly the cost is not (globally) twisted on splitting sets, as solutions can be found which are not concentrated on graphs.
\end{localmono}
In contrast to the $n=1$ case, in higher dimensions Corollary \ref{highdnonattractive} immediately implies the solution may concentrate on sets with dimension at least $2n-2$ ($=4$ in the physically relevant $m=3$ case).  Uniqueness of the solution fails and in fact, under additional, somewhat technical hypotheses, we can also conclude that there is more than one \emph{symmetric} minimizer, a distinction from the  $1$-dimensional case \cite{P13b}.

\newtheorem{quaddft}[semiclasslim]{Remark}
\begin{quaddft}
Consider replacing the Coulomb cost with the repulsive harmonic oscillator cost $-\sum_{i\neq j} |x_i-x_j|^2$, representing a milder, but still repulsive interaction.  Though clearly not as physically relevant as the Coulomb cost, this interaction has received considerable attention in the physics literature, partially because it is easier to deal with analytically \cite{bdpgg}\cite{sggs}.
 
Noting that this cost is equivalent to the convex function $|\sum_{i=1}^mx_i|^2$ of the sum,  Remark \ref{highdconvex} implies that the solution can be concentrated on manifolds of dimension $(m-1)n$; see \cite{P} and \cite{FMPCK} for details.

For the Coulomb cost, we strongly suspect that solutions cannot concentrate on $(m-1)n$-dimensional surfaces.  It remains an open question to estimate the dimension more precisely, and in particular, determine whether $2n-2$ is the worst possible case.
\end{quaddft} 

Monge solutions are often called \textit{strictly correlated electrons} in the physics literature.  The natural physical interpretation is that  the position of the first electron determines the positions of the others.  The above discussion indicates that there are certain examples when the electrons are not strictly correlated; we note here that we can \textit{never} have \textit{symmetric}  Monge solutions when $m \geq 3$.  We illustrate the reason for this in the $m=3$ case; symmetry under the permutation $(x_1,x_2,x_3) \rightarrow (x_1,x_3,x_2) $ of the measure concentrated on the graph $\{(x,T_2(x),T_3(x))\}$ easily implies $T_2 =T_3:=T$ almost everywhere.  Symmetry under the permutation  $(x_1,x_2,x_3) \rightarrow (x_2,x_1,x_3)$ then yields that points of the form $(T(x),x,T(x))$ are in the support of $\gamma$, which means that $T(T(x)) =x$  and $T(T(x)=T(x)$ .  This means that $T(x) =x$ almost everywhere; the only graphical symmetric measure is induced by the identity mapping $\gamma=(Id,Id,...,Id)_{\#}\mu$, which yields infinite energy, $C(\gamma) =\infty$, and cannot be optimal.  This immediately implies that Monge solutions and uniqueness cannot coexist, as uniqueness immediately implies permutation symmetry.  All of this holds for any other permutation symmetric cost when the marginals are equal (unless $\gamma=(Id,Id,...,Id)_{\#}\mu$ is in fact optimal, as is the case with, for example, the Gangbo-Swiech cost).

On the other hand, it remains an interesting open question to determine whether there \emph{exists} a Monge type solution to \eqref{mmk} for the Coulomb cost when $m \geq 3$ and $n \geq 2$.  In this direction,  one can at least approximate optimal measures by measures concentrated on graphs $\{(x_1,F_2(x_1),...,F_m(x_1))\}$, and one can even take $F_i =T^{i-1}$, for some measuring preserving $T$ with $T^{m+1} =Id$ \cite{cdm}.  These approximating measures are then symmetric under the cyclic permutation  $(x_1,x_2,...,x_3) \rightarrow (x_m,x_2,...,x_{m-1}) $, but not under more general permutations. 

In another direction, one can notice that as the Coulomb cost is a sum of two body interactions, one can rewrite \eqref{mmk} (with a symmetry constraint) as a minimization over measures $\mu_2$ on $\mathbb{R}^{2n}$, with an additional \emph{representability} restriction (essentially that $\mu_2$ must be the two body marginal of some measure $\gamma$ on $\mathbb{R}^{nm}$) \cite{FMPCK}.  This approach is useful in considering the limit $m \rightarrow \infty$, where it turns out that product measure (known as the mean-field functional in physics) is optimal \cite{CFP}. 
\subsection{Other applications}
While our focus here has been on applications in economics and physics, we would be remiss if we neglected to at least briefly mention a wide variety of other interesting applications for multi-marginal problems. These  include texture mixing in image processing, where the aim is to interpolate among several textures (encoded as measures) in order to synthesize a new one \cite{bdpr}, and statistics, where the basic goal is to find an average among several distributions while preserving relevant features of them \cite{BK}.  These first two areas lead to costs of the same form as in the economic matching problem \eqref{gbc} -- and in fact the barycenter of the measures is actually the fundamental object of interest here.

Other applications arise in financial mathematics. Recent work in this direction has actually focused on a variant of \eqref{mmk}, where we restrict the minimization to the set of martingale measure.  This problem arises in the derivation of model independent price bounds.  Here, the measures $\mu_i$ represent (risk neutral) distributions of the price of an asset at a sequence of future times.  Given this information, one wishes to determine the price of an exotic derivative, whose payoff $c(x_1,x_2,...,x_m)$ depends on the values $x_i$ of the underling asset at each of the times $t_i$; one can use a variety of models in mathematical finance to derive measures $\gamma$, representing a coupling of the distributions $\mu_i$, and the integral in \eqref{mmk} then represents price of the derivative. This of course depends on the measure $\gamma$, which in turn depends on the financial  model used to derive it.  Arbitrage free models always yield discrete time martingale measures, and so the minimization \eqref{mmk} with the additional constraint that $\gamma$ should be a martingale measure, tells us the minimal arbitrage free price of the derivative, among all possible models.  This has quickly become a very active area; see for example \cite{ds3}\cite{ds2}\cite{ght}\cite{hpt}\cite{BeigGriess}\cite{bhlp}\cite{ght}\cite{hltt}.

In a related financial application, a derivative's payoff $c$ may depend on the prices $x_i$ of several different assets at the same time, whose distributions $\mu_i$ are known.  Here, to find the minimal possible value, we minimize $\int c(x_1,...x_m)d\gamma(x_1,...,x_m)$ over $\Pi(\mu_1,...,\mu_m)$.  This is exactly \eqref{mmk}; there is no reason in this case to restrict to the smaller class of martingale measures; see, for example, \cite{pr}\cite{eprwb}.

We  also mention that problem \eqref{mmk} arises in the time discretization of the least action principle for incompressible fluids; see, for example, \cite{bren2}.  The cost function arising there takes the form $\sum_{i=1}^{m-1}|x_i-x_{i+1}|^2 +|x_{m} -F(x_1)|^2$, where $F$ is a prescribed measure preserving diffeomorphism. To the best of our knowledge, very little is known about the structure of solutions with this type of cost.

Finally, let us mention some applications of \eqref{mmk} in pure mathematics.,  These include a recent series of representation theorems for families of vector fields \cite{GG}\cite{GhM}\cite{GhMa}; for example, given any $(m-1)$-tuple $(u_2(x),...,u_m(x))$ of measurable vector fields on $\Omega \subset \mathbb{R}^n$, a symmetric variant of $\eqref{mmk}$ can be exploited to establish the existence of an $m$-cyclically anti-symmetric, concave-convex Hamiltonian $H$ and a measure preserving $m$-involution $S$ such that $(u_2(x),...,u_m(x)) \in \partial_{x_2,....,x_m}H(x_1,S(x),S^2(x),...,S^{m-1}(x))$.  In addition, \eqref{mmk} has proven useful in resolving variational problems on Wasserstein space \cite{Dahl}, and in the decoupling of systems of elliptic PDEs \cite{GhP1}.

\subsection{Numerics}
Numerics for multi-marginal problems have so far not been extensively developed (and even for two marginal problems, where numerical schemes have received more attention, there remain many important and challenging open issues).  Discretizing the multi-marginal problem leads to a linear program where the number of constraints grows exponentially in $m$, the number of marginals.  Very recently, however, a paper of Carlier, Oberman and Oudet studied the matching for teams  problem \eqref{gbc} \cite{coo}.  Among their findings, they were able to reformulate the problem as a linear program  whose number of constraints grows only linearly in $m$, which is much more amenable to numerics than naive linear programming techniques for the multi-marginal problem \eqref{mmk}.  Using Proposition \ref{equivalence}, these techniques yields equally tractable numerical schemes for \eqref{mmk} when the cost is of the form \eqref{infconcost}.  Performance is improved even further when each $c_i$ is quadratic, as special features of the cost can then be used to solve the barycenter problem \eqref{gbc}.   It is interesting to note that early numerical success reflects the dichotomy described in the introduction, and further exposed in the rest of this manuscript; matching type costs have low dimensional solutions, and so can be expected to be reasonably well behaved from a numerical point of view, whereas costs such as the Coulomb cost, or the repulsive harmonic oscillator cost, can yield high dimensional solutions, and so developing widely applicable numerical algorithms for these costs is likely to pose additional challenges.

Let us mention, however, that the Coulomb cost is tackled in \cite{ML}, and in several examples quite accurate solutions can be computed using parameterized functional forms of the Kantorovich potential. In the two marginal case, a linear programming approach for the Coulomb cost is implemented in \cite{cfm}. In \cite{bdpr}, Barycenter problems are addressed by replacing the  Wasserstein distance with the sliced Wasserstein distance, which is much easier to compute than the true Wasserstein distance.
\bibliographystyle{plain}
\bibliography{biblio}
\end{document}